\documentclass[letterpaper,journal]{IEEEtran}
\usepackage{amsmath,amsfonts}
\usepackage{algorithm}
\usepackage{algpseudocode}
\usepackage{array}
\usepackage[caption=false,font=normalsize,labelfont=sf,textfont=sf]{subfig}
\usepackage{graphicx}
\usepackage{caption}
\usepackage{subcaption}
\usepackage{booktabs}
\usepackage{bm}
\usepackage{xcolor, colortbl, booktabs}
\usepackage[colorlinks=true, linkcolor=blue]{hyperref}

\newtheorem{problem}{Problem}

\newtheorem{theorem}{Theorem}
\newtheorem{lemma}{Lemma}

\newtheorem{assumption}{Assumption}

\newenvironment{proof}{\par\noindent\textit{Proof.}\ }{\hfill$\square$\par}

\usepackage{color}
\usepackage{textcomp}
\usepackage{stfloats}
\usepackage{url}
\usepackage{verbatim}
\usepackage{graphicx}
\usepackage{cite}
\usepackage{xcolor}

\begin{document}

\title{Distributed MPC For Coordinated Path-Following}

 \author{
 Lusine Poghosyan$^{*}$$^\ddagger$$^{\S}$, 
Anna Manucharyan$^\ddagger$, Mikayel Aramyan$^\dagger$, Naira Hovakimyan$^\dagger$ and Tigran Bakaryan$^{*}$$^\ddagger$$^{\S}$\\

$^\S$Yerevan State University, Armenia
\\
$^\ddagger$Center For Scientific Innovation and Education, Armenia\\
$^{*}$ Institute of Mathematics of National Academy of Sciences of Republic of Armenia, Armenia\\
$^\dagger$Department of Mechanical Science \& Engineering, University of Illinois Urbana-Champaign, USA

\thanks{The research is supported by the Higher Education and Science Committee of the MESCS RA under Research Project No.~24IRF-1A001 and  in part by AFOSR grant \#FA9550-21-1-0411 and NASA grant \#80NSSC22M0070.}
}

\maketitle

\begin{abstract}
In this paper, we consider a distributed model predictive control (MPC) algorithm for coordinated path-following. Relying on the time-critical cooperative path-following framework, which decouples space and time and reduces the coordination problem to a one-dimensional setting, we formulate a distributed MPC scheme for time coordination.
Leveraging properties of the normalized Laplacian, we decouple the MPC dynamics into independent modes and derive a recursive relation linking current and predicted states. Using this structure, we prove that, for prediction horizon 
$K=1$ and a fixed connected communication network, the system is exponentially stable even in the presence of path-following errors. This provides a first result on the convergence analysis of discrete-time distributed MPC schemes within this framework.

The proposed approach enables scalable and efficient real-time implementation with low communication overhead. Moreover, in contrast to the time-critical cooperative path-following framework, the optimization-based structure relaxes the reliance on preplanning by allowing the incorporation of mission-specific requirements, such as vehicle limitations, collision avoidance, and conflict resolution. Simulation results demonstrate applicability to complex scenarios, highlighting agility and exponential convergence under communication failures.
\end{abstract}

\begin{IEEEkeywords}Distributed MPC, Coordinated path-following, Convergence analysis, Exponential stability.

\end{IEEEkeywords}

\section{Introduction}
\IEEEPARstart{C}{}oordination of multiple unmanned vehicles and robots plays a key role in modern robotics applications, including cooperative mapping, inspection, surveillance, and search-and-rescue missions \cite{olfati2007consensus, cao2013overview}. 
In these missions, each vehicle must be carefully synchronized in time to achieve overall mission success. Ensuring such temporal coordination becomes particularly challenging when the vehicles operate under communication limitations, physical constraints, and uncertain environments, without reliable and efficient access to a central controller. To address these challenges, we consider a distributed MPC-based approach.

The coordination problem investigated in this work relies on the well-known concept of time-critical cooperative path-following \cite{time-clock,5160564,7065327,kaminer2017time}, which decouples space and time in the general problem formulation, see Section \ref{sec:pre}. This separation allows the coordination challenge to be divided into three distinct subproblems: desired trajectory generation or mission planning,  path-following (effective low-level controller) and  mission execution (coordination). The advantage of this framework is that these subproblems can be addressed independently. The first, trajectory generation (see, e.g., \cite{Dubins1957OnCO, 5980409}), can be solved offline while accounting for vehicle dynamics. Path-following is an independent component implemented onboard each UAV (see, e.g., \cite{5717652, 7963104}), which is typically very efficient and possesses desirable properties such as exponential convergence. Therefore, these two components do not affect real-time implementation and do not require significant computational resources; only the real-time operation of the final subproblem matters. The final subproblem, mission execution, is a one-dimensional coordination problem that does not involve vehicle dynamics and is therefore independent of the specific vehicle and can be solved efficiently. Given these advantages, we adopt this framework for multi-vehicle coordination, with particular focus on the coordination component, which we address using a distributed MPC approach. Although the framework is vehicle-independent, in this paper we focus on multirotor UAVs.

The cooperative path-following problem has been extensively studied in the context of proportional–integral (PI) controllers, with applications to both marine vehicles \cite{VANNI200775} and UAVs \cite{kaminer2017time}. The asymptotic stability of these methods has been established under various assumptions on the connectivity of the communication network \cite{kaminer2017time, KH-23}. These approaches are efficient and applicable in real-world scenarios due to their low communication cost and computational efficiency. Nevertheless, they have a significant limitation: their strong reliance on preplanning restricts adaptability and reduces UAV autonomy, particularly when satisfying operational requirements or responding to time-varying mission objectives. To overcome these limitations, in \cite{paper}, we reformulate the coordination problem within a game-theoretic framework (see Fig. \ref{fig:block-diagram}). This approach offers greater generality and flexibility in accommodating operational constraints and dynamically evolving mission goals, while preserving the advantages of low communication and computational cost. In this paper, we investigate the numerical realization of this game-theoretic framework through a distributed MPC formulation. Note that existing stability analyses in the cooperative path-following framework (see, for example, \cite{kaminer2017time} and references therein) are conducted in continuous time. To the best of our knowledge, this is the first work that directly studies the convergence properties of a discrete-time numerical method in this setting.

As mentioned above, using the concept of virtual time (see Section~\ref{sec:pre}), we formulate the coordination problem as a one-dimensional distributed MPC (DMPC) problem (see Section~\ref{sec:mpc}) with a strongly convex quadratic cost. 
To analyze the convergence of the DMPC scheme, we consider an auxiliary unconstrained MPC problem (see Section~\ref{sec:unconstraint}), where only the system dynamics are retained and constraints arising from vehicle limitations (such as velocity and acceleration bounds) are removed. This setting allows us to derive an explicit expression for the MPC solution. 
Leveraging this representation, together with spectral properties of the normalized Laplacian, we show that, for prediction horizon $K=1$, the closed-loop dynamics decouple into independent modes. In particular, we establish an explicit relation between consecutive MPC steps via a $2\times 2$ matrix that depends only on the Laplacian eigenvalues (capturing the network connectivity) and the time step. 
Using this explicit recurrence relation and Gelfand’s formula, we prove that the associated matrix is a contraction under a fixed and connected communication network, which implies convergence of the unconstrained DMPC scheme, Theorem~\ref{theorem-unconstraint}. 
Finally, we show that for sufficiently small time step $h$, there exists a domain such that, for all initial conditions in this domain, the unconstrained solution satisfies the constraints of the original DMPC problem, see Theorem~\ref{theorem-main}. Consequently, the DMPC scheme is exponentially convergent, and the convergence rate is explicitly characterized. The analysis also accounts for disturbances arising from path-following errors. 

To the best of our knowledge, this is the first approach in the literature that proves exponential stability of a DMPC scheme through an explicit modal analysis, while also providing an explicit convergence rate. Existing convergence results have generally been established through Lyapunov- and invariance-based arguments, or through geometric properties of optimal trajectories; see, for example, Zhou and Li~\cite{zhou2015distributed} and Ferrari-Trecate et al.~\cite{ferrari2009consensus}.

The optimization-based formulation enables direct incorporation of physical and mission-level constraints while preserving convergence guarantees, meaning the control inputs provided by our method are always feasible. In contrast, PI controller–based methods (see \cite{kaminer2017time}) require problem-dependent parameter tuning, which becomes increasingly challenging as the number of vehicles and mission complexity grow. For a detailed comparison, see \cite{paper} (Version~2).

Although the analysis assumes a fixed communication network, the proposed algorithm performs well under communication failures. Environmental disturbances enter as path-following errors and do not affect convergence due to the strong convexity of the one-dimensional DMPC formulation. These properties are evident in the simulations of \cite{paper}.

Here, we further validate the proposed method through simulations in RotorPy \cite{folk2023rotorpy}, an open-source simulator for multirotor UAVs. We study the effect of the prediction horizon $K$ and observe results consistent with our theoretical findings, in particular exponential stability for $K=1$, while also demonstrating real-time feasibility and scalability. In particular, the computational time remains nearly unchanged as the number of UAVs increases.
The results show that $K=1$ achieves the best computational performance, with an average update rate of approximately 100~Hz and an order-of-magnitude improvement over $K=5$ and $K=10$ (see Table~\ref{tab:mpc_comparison}). However, shorter horizons lead to slower coordination.

In addition to this sensitivity analysis, we consider a challenging scenario that further motivates the proposed game-theoretic approach. In this scenario, two modes are considered: pure synchronization, and navigation through a narrow corridor that permits only single-UAV passage while avoiding inter-agent collisions. 
To address this problem, we consider two formulations: an offline ordering based on preplanning, and an autonomous game-based approach that determines the order during the mission (see Section~\ref{sec:seqvsgame}). 
In the presence of disturbances, preplanning-based methods may fail, as disturbances can alter the prescribed ordering. In contrast, the proposed approach yields efficient and adaptive coordination by incorporating operational objectives, ensuring collision-free passage of the group. This behavior is demonstrated in Section~\ref{sec:seqvsgame}.

The proposed distributed MPC algorithm combines the scalability and low communication requirements of consensus-based methods with the predictive and constraint-handling capabilities of MPC, making it suitable for real-time implementation in complex coordination missions. 
We establish exponential stability of the discrete-time scheme and provide an explicit convergence rate under a fixed connected communication network and prediction horizon $K=1$.  Simulation results demonstrate scalability and effectiveness in complex coordination scenarios.

\begin{figure}
    \centering
\includegraphics[width=1.0\linewidth]{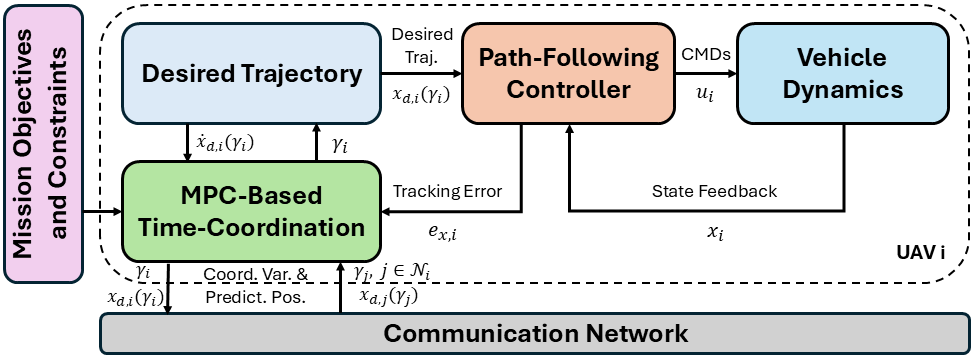}
    \caption{Cooperative path-following control framework of multi-agent UAV systems.}
    \centering
    \label{fig:block-diagram}
\end{figure}

\section{Preliminaries}\label{sec:pre}

In this section, we review the time-critical cooperative path-following framework and formulate the coordination problem within this setting. In particular, we recall the notion of virtual time and the associated variables used throughout the paper.

The cooperative path-following framework (see \cite{doi:10.1137/060678993, kaminer2017time}) comprises three main components: a trajectory-generation algorithm (see the light-blue block in Fig.~\ref{fig:block-diagram}) that produces a set of feasible and collision-free paths, a path-following controller (orange block in Fig.~\ref{fig:block-diagram}) that enables UAVs to follow assigned trajectories, and a time-coordination control algorithm (green block in Fig.~\ref{fig:block-diagram}) that allows UAVs to adjust their pace to maintain coordination with neighboring agents. 

Although the framework can be applied to heterogeneous multi-vehicle systems, in this paper we focus on multirotor UAVs. Section~\ref{path-planning-and-assumptions} introduces the notion of virtual time and discusses the constraints associated with trajectory generation. Section~\ref{time-coordination-objective} defines the coordination objective in the virtual-time domain.

Before introducing virtual time, we briefly review two key subproblems in cooperative path-following: mission planning and path-following.
Mission planning is achieved through trajectory generation. For each UAV, trajectories can be designed using optimal-control formulations, waypoint-based schemes, or minimum-snap techniques commonly used in quadrotor motion planning (see \cite{Beard2012, 4310229, Dubins1957OnCO, 5980409}).
Each UAV is equipped with a path-following controller (e.g., PID controllers \cite{salih2010flight}, geometric controllers \cite{5717652}, and adaptive path-followers \cite{7963104}) to ensure accurate tracking of its assigned trajectory.

\subsection{Virtual Time}\label{path-planning-and-assumptions}
 
We introduce the notation and definitions required to formulate the coordination problem in the virtual-time domain.
Consider a mission involving a group of $N$ UAVs, where $N \in \mathbb{N}$. Each UAV $i$ is assigned a desired trajectory generated during the mission planning stage. The desired trajectory is defined as
$
x_{d,i} : [0, t^*_{d,i}] \rightarrow \mathbb{R}^3,
$
where $t^*_{d,i}$ denotes the desired completion time for UAV $i$. These trajectories satisfy the linear velocity and acceleration constraints given by:
\begin{equation} \label{eq:dyn_constraints}
\begin{split}
0 \leq v^i_{\min} < v^i_{d,\min} \leq \left\| \dot{x}_{d,i}(t) \right\| \leq v^i_{d,\max} < v^i_{\max}, \\
\left\| \ddot{x}_{d,i}(t) \right\| \leq a^i_{d,\max} < a^i_{\max},
\end{split}
\end{equation}
where $v^i_{\min}$ and $v^i_{\max}$ are the UAV’s physical speed limits, and $a^i_{\max}$ is the maximum acceleration. The mission-specific bounds $v^i_{d,\min}, v^i_{d,\max}, a^i_{d,\max}$ are chosen to ensure feasible and safe motion. Moreover, throughout the mission, a minimum separation distance $E$ is enforced for safety: \begin{equation}\label{def-sp-sep}
\min _{\substack{i, j=1, \ldots, N \\ i \neq j}}\left\|x_{d, i}(t)-x_{d, j}(t)\right\|^2 \geq E^2>0, \,\text{for all } t\geq 0. 
\end{equation} 

In this work, we focus on the simultaneous-arrival mission, as other time-critical mission specifications can be analyzed using similar arguments. Accordingly, a common nominal final time $t_d^* \in \mathbb{R}^+$ is assigned to all UAVs; that is,
\begin{equation}
t_{d,i}^* = t_d^*, \quad i = 1, \ldots, N.
\end{equation}
This ensures synchronized arrival under ideal conditions.

Virtual time is defined as a mapping $\gamma_i: \mathbb{R}^{+} \rightarrow [0, t_d^*]$ (see \cite{7065327,kaminer2017time}), which encodes the progress of the $i$th UAV along its nominal mission timeline. By mapping real time to this shared timeline, $\gamma_i$ acts as a coordination variable that regulates deviations in mission progress across the UAVs. The resulting time-parameterized trajectory $x_{\gamma,i}(t)=x_{d,i}(\gamma_i(t))$ is then used as the reference input for the path-following controller, defining the overall structure of the method (see Fig.~\ref{fig:block-diagram}).

We denote by $\gamma_i^0 := \gamma_i(0)$ the initial virtual time of UAV $i$, which represents its initial progress along the nominal mission timeline. For convenience, we assume $\gamma_i^0 \geq 0$ for all $i = 1, \dots, N$, corresponding to defining the mission start time as the instant when the last UAV begins execution. Other initialization choices can be handled similarly without affecting the formulation. 

The mission of UAV $i$ is completed when its virtual time reaches the terminal value $t_d^*$ at some time $t_f^i$; that is, $\gamma_i(t_f^i)=t_d^*$.

Note that $\dot{\gamma}_i(t) > 1$ corresponds to an accelerated progression along the path, $\dot{\gamma}_i(t) < 1$ indicates a decelerated progression, and $\dot{\gamma}_i(t) = 1$ implies that the UAV follows the desired trajectory at the nominal pace. The  physical constraints in \eqref{eq:dyn_constraints}  impose limits on the  virtual time function:
\begin{equation}\label{gammadotconst}
\tfrac{v_{\min }^i}{v_{d, \min }^i} \leq \dot{\gamma}_{i,\min} \leq \dot{\gamma}_i(t) \leq \dot{\gamma}_{i,\max} \leq \tfrac{v_{\max }^i}{v_{d, \max }^i},
\end{equation}
\begin{equation}\label{gammaddotconst}
\ddot{\gamma}_{\max }^i v_{d, \max }^i+\left(\dot{\gamma}_{\max }^i\right)^2 a_{d, \max }^i \leq a_{\max }^i, \, |\ddot{\gamma}_i(t)| \leq \ddot{\gamma}_{i,\max}
\end{equation}
For a detailed derivation, see \cite{kaminer2017time} or \cite{cichella20133d}.

\subsection{Time-Coordination}\label{time-coordination-objective}

Depending on the mission specifications, the coordination problem in terms of virtual time can take different forms. In this work, we focus on the synchronization (simultaneous-arrival) case, for which we provide a theoretical analysis. Accordingly, we present only the formulation of the synchronization problem here. Other mission formulations, such as sequential arrival and competitive scenarios, are illustrated in the simulation section, see Section~\ref{sec:sim}.

The goal is to achieve simultaneous arrival of all UAVs while preserving the desired speed profiles, without strictly enforcing the nominal arrival time $t_d^*$.
This requirement is expressed as a coordination condition on the virtual time variables:
\begin{equation}\label{def-syn} \gamma_i(t) - \gamma_j(t) = 0, \quad \forall \, i, j \in \{1, 2, \dots, N\}. 
\end{equation}
Additionally, each UAV should maintain its nominal progression rate along the trajectory:
\begin{equation}\label{def-pace-keep} \dot{\gamma}_i(t) = 1, \quad \forall \,i \in \{1, 2, \dots, N\}. \end{equation}

\section{Main Approach}\label{sec:mpc}

This section describes the MPC-based algorithm for distributed time coordination of UAVs, originally introduced in \cite{paper}.
The algorithm computes \emph{virtual-time} trajectories that enforce the synchronization and pace-keeping objectives defined in \eqref{def-syn} and \eqref{def-pace-keep}.

Recall that pace keeping requires
\[
\dot\gamma_i(t)\to 1,
\]
while synchronization requires
\[
\gamma_i(t)-\gamma_j(t)\to 0.
\]
These objectives motivate describing the coordination dynamics in terms of the deviation variables 
\[
\delta_{i}(t)=\gamma_i(t)-t,\qquad
\dot\delta_{i}(t)=\dot\gamma_i(t)-1,\qquad
\ddot\delta_i(t)=\ddot\gamma_i(t),
\]
which represent, respectively, the deviation of the virtual time from its nominal evolution and the deviation of its rate from the desired unit value.  In terms of these new variables, the time-coordination objectives are
\[
\dot\delta_i(t)\to 0, \qquad \delta_i(t)-\delta_j(t)\to 0.
\]

As in standard DMPC formulations,  at each discrete time instant \(t_k=kh\), UAV \(i\) computes a predicted deviation trajectory by solving a local finite-horizon optimal control problem. The optimization is carried out over a prediction horizon of length \(K\) and uses the decision variables
\[
\bm{\delta}^k_i = (\delta^k_{i,0}, \dots, \delta^k_{i,K})^T,
\qquad
\dot{\bm{\delta}}^k_i = (\dot\delta^k_{i,0}, \dots, \dot\delta^k_{i,K})^T,
\]
\[
\bm{u}^k_i = (u^k_{i,0}, \dots, u^k_{i,K-1})^T.
\]

The index \(\tau=0\) corresponds to the current sampling instant \(t_k\), whereas the indices \(\tau=1,\dots,K\) correspond to predicted future values within the finite-horizon optimization problem. 
The coordination state is described by the deviation variables $\delta_{i,0}^k$, $\dot\delta_{i,0}^k$, while $u_{i,0}^k$
denotes the control input computed at time \(t_k\). For $\tau>0$, \(\delta_{i,\tau}^k\) and \(\dot\delta_{i,\tau}^k\) denote the corresponding predicted deviation variables. Only the first input \(u^k_{i,0}\) is applied to the system, while the remaining components are auxiliary prediction variables used to compute the optimal control action.

The algorithm is initialized at time \(t_0=0\), so the initial variables are
\[
\delta^0_{i,0}=\gamma_i^0-t_{0}=\gamma_i^0,
\qquad
\dot{\delta}^0_{i,0}=\dot{\gamma}_i^0-1.
\]
At $k=0$, before any optimization is performed, the predicted deviation trajectory is initialized by propagating the initial state with zero input:
\[
\delta^0_{i,\tau+1}=\delta^0_{i,\tau}+h\dot{\delta}^0_{i,\tau},
\quad
\dot{\delta}^0_{i,\tau+1}=\dot{\delta}^0_{i,\tau},\quad \tau=0,\dots,K-1.
\]
For \(k\ge 1\), the initial values \(\delta^k_{i,0}\) and \(\dot{\delta}^k_{i,0}\) are determined from the optimal prediction computed at the previous time step according to the MPC update rule.

Using the bounds \eqref{gammadotconst} and \eqref{gammaddotconst} on the virtual-time rate and acceleration, we define the admissible set associated with
\[
\mathbf{z}^{k}_{i} = (\bm{\delta}^{k}_{i}, \dot{\bm{\delta}}^{k}_{i}, \bm{u}^{k}_{i})
\]
as
\begin{equation}
\begin{split}
A_i^k = \Big\{ \mathbf{z}^k_i:\;
&\ \dot{\delta}_{i,\min}
\le
\dot{\delta}^k_{i,\tau}
\le
\dot{\delta}_{i,\max},
\quad \tau = 0,\dots,K,
\\
&
|u^k_{i,\tau}|
\le
\ddot{\gamma}_{i,\max},
\quad \tau = 0,\dots,K-1
\Big\},
\end{split}
\end{equation}
where 
\[
\dot{\delta}_{i,\min}=\dot{\gamma}_{i,\min} - 1,
\qquad
\dot{\delta}_{i,\max}=\dot{\gamma}_{i,\max} - 1.
\]

The local finite-horizon optimization problem of UAV $i$ at time $t_k$ is given by
\begin{equation}\label{ProblemK}
\begin{aligned}
\min_{\mathbf{z}^{k}_{i}\in A_{i}^k} \quad 
& J_i\!\left(\mathbf{z}^{k}_{i}, \bm{\delta}^{k-1}_{-i}\right)
\\
\text{subject to} \quad
& \delta_{i,\tau+1}^k
=
\delta_{i,\tau}^k
+
h\dot\delta_{i,\tau}^k
+
\dfrac{h^2}{2}u_{i,\tau}^k,
\\[3pt]
& \dot\delta_{i,\tau+1}^k
=
\dot\delta_{i,\tau}^k
+
hu_{i,\tau}^k,
\\[3pt]
& \delta^{k}_{i,0}
=
\delta^{k-1}_{i,1}
-
\alpha_{i}^{k},
\\[3pt]
& \dot{\delta}^{k}_{i,0}
=
\dot{\delta}^{k-1}_{i,1},
\end{aligned}
\end{equation}
for $k=1,2,\dots$ and $\tau = 0,\dots,K-1$, where $\bm{\delta}_{-i}^{k-1}:= \left[\bm{\delta}^{k-1}_{1},\dots,\bm{\delta}^{k-1}_{i-1},\bm{\delta}^{k-1}_{i+1},\dots,\bm{\delta}^{k-1}_{N}\right]$
denotes the predicted virtual-time deviation sequences received from the other UAVs.

The  constraints in \eqref{ProblemK} relate the prediction variables across the horizon, enforce the discrete-time double-integrator dynamics in deviation coordinates, and implement the MPC shift together with the path-following correction.

The cost function $J_i$ is defined as follows:
\begin{equation}\label{cost1}
	J_i\left(\mathbf{z}^{k}_{i},\bm{\delta}^{k-1}_{-i}\right)=((F_i^{cm}\left(\bm{\delta}^{k}_{i},\bm{\delta}^{k-1}_{-i}\right),\|\dot{\bm{\delta}}^k_i\|_2^2,
	\|\bm{u}_i^k\|_2^2),\bm{w})
\end{equation}
with $\bm{w}=(w_1,w_2,w_3)$ and 
\begin{equation}\label{synchron}
	\begin{split}
	F_i^{cm}\left(\bm{\delta}^{k}_{i},\bm{\delta}^{k-1}_{-i}\right)=\frac{1}{\left|\mathcal{N}_{i,k}\right|}\sum_{j\in \mathcal{N}_{i,k}} \|\bm{\delta}_i^k-\bm{\delta}_j^{k-1}\|_2^2.
	\end{split}
\end{equation}
Here, $\mathcal{N}_{i,k}$ denotes the set of UAVs that communicate with UAV $i$ at time $t_{k}$.
The first term of the cost function penalizes deviations between the predicted  trajectory (solution) of UAV $i$ and the most recently received deviation trajectories of its neighbors, thereby promoting synchronization. The second term penalizes deviations from the desired unit pace, while the third term penalizes excessive control effort. The positive weights $w_1, w_2, w_3$  determine the relative emphasis of each component.

Path-following errors that arise in uncertain and dynamic environments
are incorporated into the initial condition of the optimization problem enabling the UAV to decelerate to reduce forward overshoot or to accelerate in order to catch up with the reference trajectory.
In particular, we introduce the correction term $\alpha_i^k$ defined as
\begin{equation}\label{def-a}
	\alpha_{i}^{k} =\alpha_{i}^{k}(x_{i}(t_{k}))= \beta\dfrac{(x_{\gamma,i}(t_{k})-x_{i}(t_{k}))^T\dot{x}_{\gamma,i}(t_{k})}{\left|\left|\dot{x}_{\gamma,i}(t_{k})\right|\right|+\delta},
\end{equation}
where $\beta$, $\delta$ are positive parameters and $x_{i}(t_{k})$ represents $i$-th UAV  position at time $t_{k}$.  $\alpha_{i}^{k}$ is negative when the projection of the UAV's actual position onto the desired trajectory lies ahead of the reference position $x_{\gamma_i}(t_{k})$ and positive otherwise.
The parameter $\delta$ in the correction term $\alpha_{i}^{k}$ is introduced to prevent division by zero; it can be set as a small positive constant. The parameter $\beta$ regulates the influence of the path-following error on the coordination process. Increasing $\beta$ accelerates time coordination but may cause constraint violations if chosen excessively large.

The algorithm accounts for both time-varying communication topology and path-following errors. Although the network topology may change from one time step to the next, it is assumed to remain fixed throughout each optimization interval.

At each time step $t_k$, each UAV solves the local problem in \eqref{ProblemK}
using the most recent information $\bm{\delta}_{-i}^{k-1}$ received from its neighbors,
and transmits the resulting virtual-time deviation sequence $\bm{\delta}_i^{k}$
to the UAVs in $\mathcal N_{i,k+1}$ at the next sampling instant $t_{k+1}$.
This distributed MPC procedure, summarized in Algorithm~\ref{alg:algorithm} that
together with the initialization at $k=0$, generates a sequence
$\{\mathbf z_i^k\}_{k=0}^{\infty}$.

\begin{algorithm}[t]
\caption{Distributed MPC for multi-agent time coordination}
\label{alg:algorithm}
\begin{algorithmic}[1]
\State \textbf{Initialize:} number of UAVs $N$, desired trajectories $x_{d,i}$, bounds $\dot{\gamma}_{i,\min}$, $\dot{\gamma}_{i,\max}$, $\ddot{\gamma}_{i,\max}$, initial conditions $\gamma_i^0$, $\dot{\gamma}_i^0$, prediction horizon $K$, final time $T$, sampling period $h$
\State Set $\delta_{i,0}^0 \gets \gamma_i^0,\,
\dot{\delta}_{i,0}^0 \gets \dot{\gamma}_i^0-1,\, i=1,\dots,N$
\State $k \gets 1$
\While{$t_{k}= kh \le T$}
    \For{$i = 1$ to $N$}
        \State Measure the actual position $x_i(t_k)$
        \State Compute the path-following correction $\alpha_i^{k}=\alpha_i^{k}(x_i(t_{k}))$
        \State Set the initial conditions for the local MPC problem:
        \[
        \delta^{k}_{i,0} \gets \delta^{k-1}_{i,1} - \alpha_{i}^{k},
        \qquad
        \dot{\delta}^{k}_{i,0} \gets \dot{\delta}^{k-1}_{i,1}
        \]
        \State Solve the local optimization problem \eqref{ProblemK} using
        $\delta^{k}_{i,0}$, $\dot{\delta}^{k}_{i,0}$, and $\bm{\delta}_{-i}^{k-1}$,
        obtaining $\bm{\delta}_i^k,\,
        \dot{\bm{\delta}}_i^k,\,
        \bm{u}_i^k$
        \State Transmit $\bm{\delta}_i^k$ to the UAVs in $\mathcal N_{i,k+1}$
    \EndFor
    \State $k \gets k+1$
\EndWhile
\end{algorithmic}
\end{algorithm}
Under suitable assumptions on the communication network and the path-following error, the virtual-time deviations synchronize and the rate deviations converge to zero; that is,
\[
(\delta_{i,0}^k - \delta_{j,0}^k) \to 0,
\qquad
\dot{\delta}_{i,0}^k \to 0,
\quad \text{as } k \to \infty.
\]
In the next section, for the prediction horizon $K=1$, we prove the convergence.

\section{Convergence Proof}

In this section, we establish convergence of the proposed DMPC scheme under a fixed communication topology with an undirected and connected communication graph, and for prediction horizon \(K=1\).

We first analyze an unconstrained DMPC problem, obtained by removing from the admissible set \(A_i^k\) the bounds on \(\dot{\delta}\) and the control input \(u\). For this simplified problem, we prove convergence of the resulting scheme.

We then show that, for sufficiently small time step \(h\) and for a suitable class of initial conditions, the solution of the unconstrained problem satisfies the constraints of the original DMPC formulation at every step. Consequently, for such initial conditions, the convergence result extends to the constrained problem.

Now, we state the assumptions used in the convergence analysis.The assumptions on the communication network are summarized as follows.
\begin{assumption}\label{ass:graph}
The communication topology is fixed, i.e.,
\[
\mathcal N_{i,k}=\mathcal N_i,\qquad i=1,\dots,N,\quad k\ge 0,
\]
and the associated communication graph is undirected and connected.
\end{assumption}

In addition, we assume that each UAV is equipped with an asymptotically stable path-following controller.
\begin{assumption}\label{ass:tracking}
The path-following correction satisfies
\[
|\alpha_i^k|\le d e^{-\nu t_k}=d e^{-\nu kh},
\qquad i=1,\dots,N,\quad k\ge 0,
\]
for some constants \(d>0\) and \(\nu>0\).
\end{assumption}

\subsection{Unconstrained Case}\label{sec:unconstraint}
In this subsection, we study the corresponding unconstrained DMPC problem. This setting admits an explicit expression for the solution. Using this representation, we derive a recurrence relation between consecutive MPC solutions and analyze the resulting closed-loop dynamics to establish convergence.

Before stating the main result, we introduce auxiliary variables based on the explicit solution of the unconstrained problem and on the spectral properties of the Laplacian and degree matrices. This transformation decouples the system into independent modes and leads to a recurrence relation for each mode governed by a \(2\times 2\) matrix.

Since the prediction horizon is \(K=1\), the index \(\tau\) in \eqref{ProblemK} takes only the value \(0\). Therefore, at each sampling instant \(t_k\), UAV \(i\) solves the following minimization problem:
\begin{problem}\label{DiscreteUn} 
\begin{equation*}
\begin{aligned}
\min_{\mathbf{z}^{k}_{i}} \quad 
& J_i\!\left(\mathbf{z}^{k}_{i}, \bm{\delta}^{k-1}_{-i}\right)
\\
\text{subject to} \quad
\delta_{i,1}^k
&=
\delta_{i,0}^k
+
h\dot\delta_{i,0}^k
+
\dfrac{h^2}{2}u_{i,0}^k,
\\[3pt]
\dot\delta_{i,1}^k
&=
\dot\delta_{i,0}^k
+
hu_{i,0}^k,
\\[3pt]
\delta^{k}_{i,0}
&=
\delta^{k-1}_{i,1}
-
\alpha_{i}^{k},
\\[3pt]
\dot{\delta}^{k}_{i,0}
&=
\dot{\delta}^{k-1}_{i,1}.
\end{aligned}
\end{equation*}
\end{problem}

By substituting the one-step prediction dynamics into the cost function, Problem~\ref{DiscreteUn} reduces, up to additive constants independent of \(u_{i,0}^k\), to the minimization of the quadratic function
\begin{equation}\label{ProblemK1}
\begin{split}
&\bar J_i(u^k_{i,0})
=
w_2(\dot\delta^{k-1}_{i,1}+hu^k_{i,0})^2
+
w_3(u^k_{i,0})^2
\\
&+
\tfrac{w_1}{|\mathcal N_i|}
\sum_{j\in \mathcal N_i}
(
\delta^{k-1}_{i,1}
+h\dot\delta^{k-1}_{i,1}
-\alpha_i^{k}
+\tfrac{h^2}{2}u^k_{i,0}
-\delta^{k-1}_{j,1}
)^2 .
\end{split}
\end{equation}
Since \(w_1,w_2,w_3>0\), the coefficient of \((u_{i,0}^k)^2\) is strictly positive, and therefore \(\bar J_i\) is strictly convex.
The unique minimizer of the quadratic function \(\bar J_i\) on \(\mathbb R\) is given by
\begin{equation}\label{control}
u^k_{i,0}
=
-a^h\,\tfrac{1}{|\mathcal N_i|}\sum_{j\in\mathcal N_i}
(\delta^{k-1}_{i,1}-\delta^{k-1}_{j,1})
-b^h\,\dot\delta^{k-1}_{i,1}
+a^h\,\alpha_i^{k},
\end{equation}
where
\[
a^h
=
\tfrac{w_1h^2}{\,2(w_3+w_2h^2+w_1\frac{h^4}{4})\,},
\qquad
b^h
=
\tfrac{w_2h+w_1\frac{h^3}{2}}{\,w_3+w_2h^2+w_1\frac{h^4}{4}\,}.
\]

Let \(L\) be the Laplacian matrix of the fixed communication graph and let
\[
D=\operatorname{diag}(|\mathcal N_1|,\dots,|\mathcal N_N|)
\]
be the corresponding degree matrix. Since the graph is connected, \(D\) is invertible. Then the control law in \eqref{control} can be written compactly as
\begin{equation}\label{control2}
\bm{U}^k = -a^h D^{-1}L\,\bm{\Delta}^{k-1}
- b^h \dot{\bm{\Delta}}^{k-1}
+ a^h \bm{\alpha}^{k},
\end{equation}
where
\begin{equation}
\begin{aligned}
&\bm{U}^k := [u_{1,0}^k,\dots,u_{N,0}^k]^T,\quad
\bm{\Delta}^k := [\delta_{1,1}^k,\dots,\delta_{N,1}^k]^T,\\
&\dot{\bm{\Delta}}^k := [\dot\delta_{1,1}^k,\dots,\dot\delta_{N,1}^k]^T,\quad
\bm{\alpha}^k := [\alpha_1^k,\dots,\alpha_N^k]^T.
\end{aligned}
\end{equation}

The discrete-time state update for the network is
\begin{equation}\label{state}
\begin{cases}
\bm{\Delta}^{k} = \bm{\Delta}^{k-1}- \bm{\alpha}^{k}
+ h \dot{\bm{\Delta}}^{k-1}
+ \dfrac{h^2}{2} \bm{U}^k,\\[6pt]
\dot{\bm{\Delta}}^{k}  = \dot{\bm{\Delta}}^{k-1}  + h \bm{U}^k.
\end{cases}
\end{equation}

Substituting \eqref{control2} into \eqref{state} yields the closed-loop affine system
\begin{equation}\label{state2}
\begin{bmatrix}
\bm{\Delta}^{k} \\[5pt]
\dot{\bm{\Delta}}^{k}
\end{bmatrix}
=
A
\begin{bmatrix}
\bm{\Delta}^{k-1} \\[5pt]
\dot{\bm{\Delta}}^{k-1}
\end{bmatrix}
+
\begin{bmatrix}
(\tfrac{h^2}{2}a^h-1)\bm{\alpha}^{k} \\[5pt]
a^h h\,\bm{\alpha}^{k}
\end{bmatrix},
\end{equation}
where \(A\in\mathbb{R}^{2N\times 2N}\) is given by
\begin{equation}
A=
\begin{bmatrix}
I - \dfrac{a^{h}h^{2}}{2} D^{-1}L
& \left(h - \dfrac{b^{h}h^{2}}{2}\right) I \\[5pt]
-a^{h}h D^{-1}L
& \left(1- b^{h} h \right) I
\end{bmatrix}.
\end{equation}

Since the communication graph is undirected, the Laplacian matrix \(L\) is symmetric. Therefore, the normalized Laplacian
\[
\mathcal L = D^{-1/2} L D^{-1/2}
\]
is also symmetric. Although the random-walk normalized Laplacian \(D^{-1}L\) is generally not symmetric, it is similar to \(\mathcal L\), since
\[
D^{-1}L = D^{-1/2}\,\mathcal L\,D^{1/2}.
\]
Hence \(D^{-1}L\) is diagonalizable and has the same eigenvalues as \(\mathcal L\). Since \(\mathcal L\) is symmetric and positive semidefinite, all eigenvalues of \(D^{-1}L\) are real and nonnegative. Moreover, for an undirected, connected graph, these eigenvalues satisfy
\[
0=\lambda_1 < \lambda_2\le \dots \le \lambda_N\le 2
\]
(see, e.g., \cite{Biggs}).

Therefore,
\[
D^{-1}L = V\Lambda V^{-1},
\]
where \(\Lambda=\mathrm{diag}(\lambda_1,\dots,\lambda_N)\), and the columns of \(V\) are eigenvectors of \(D^{-1}L\). The eigenvector corresponding to \(\lambda_1=0\) is the constant vector
\[
\mathbf 1=[1,\dots,1]^T.
\]

Since $D^{-1}L = V\Lambda V^{-1}$, we introduce the transformed variables
\begin{equation}\label{varsub}
\bar{\bm{\Delta}}^k = V^{-1}\bm{\Delta}^k,
\quad
\dot{\bar{\bm{\Delta}}}^k = V^{-1}\dot{\bm{\Delta}}^k,
\quad
\bar{\bm{\alpha}}^k = V^{-1}\bm{\alpha}^k .
\end{equation}
Here, $\bm{\Delta}^k,\dot{\bm{\Delta}}^k,\bm{\alpha}^k \in \mathbb{R}^N$,
and $V \in \mathbb{R}^{N\times N}$ is the matrix of eigenvectors of $D^{-1}L$.

Under this change of coordinates, the Laplacian term becomes diagonal. More precisely, applying the block-diagonal transformation
\[
\operatorname{diag}(V^{-1},V^{-1})
\]
to \eqref{state2} and using
\[
\bm{\Delta}^k = V\bar{\bm{\Delta}}^k,
\qquad
\dot{\bm{\Delta}}^k = V\dot{\bar{\bm{\Delta}}}^k,
\qquad
D^{-1}L=V\Lambda V^{-1},
\]
the closed-loop system \eqref{state2} becomes
\begin{equation}\label{state3}
\begin{bmatrix}
\bar{\bm{\Delta}}^k  \\[5pt]
\dot{\bar{\bm{\Delta}}}^k
\end{bmatrix}
=
\mathcal Q^{h}
\begin{bmatrix}
\bar{\bm{\Delta}}^{k-1}  \\[5pt]
\dot{\bar{\bm{\Delta}}}^{k-1}
\end{bmatrix}
+
\begin{bmatrix}
(\tfrac{h^2}{2}a^h-1)\,\bar{\bm{\alpha}}^{k} \\[5pt]
a^h h\,\bar{\bm{\alpha}}^{k}
\end{bmatrix},
\quad k\ge1,
\end{equation}
where
\[
\mathcal Q^{h}:=
\begin{bmatrix}
I - \dfrac{a^{h}h^{2}}{2}\Lambda 
& \left(h - \dfrac{b^{h} h^{2}}{2}\right) I \\[6pt]
- a^{h} h \Lambda  
& (1 - b^{h} h) I
\end{bmatrix}
\in \mathbb{R}^{2N\times 2N}.
\]

Iterating \eqref{state3}, we obtain
\begin{equation}\label{state4}
\begin{bmatrix}
\bar{\bm{\Delta}}^k \\[5pt]
\dot{\bar{\bm{\Delta}}}^k
\end{bmatrix}
=
\left(\mathcal Q^{h}\right)^k
\begin{bmatrix}
\bar{\bm{\Delta}}^{0} \\[5pt]
\dot{\bar{\bm{\Delta}}}^{0}
\end{bmatrix}
+
\begin{bmatrix}
\bar{\bm{v}}^{k}\\[5pt]
\bar{\bm{w}}^{k}
\end{bmatrix},
\end{equation}
where
\begin{equation}\label{barvbarw}
\begin{bmatrix}
\bar{\bm{v}}^k \\[5pt]
\bar{\bm{w}}^k
\end{bmatrix}
=
\sum_{s=1}^{k}
\left(\mathcal Q^{h}\right)^{k-s}
\begin{bmatrix}
(\tfrac{h^2}{2}a^h-1)\,\bar{\bm{\alpha}}^{s} \\[5pt]
a^h h\,\bar{\bm{\alpha}}^{s}
\end{bmatrix}.
\end{equation}

In the eigenvector coordinates of \(D^{-1}L\), the transformed system decouples into \(N\) independent \(2\times2\) subsystems. Hence, for each \(i=1,\dots,N\), the pair
\[
(\bar{\delta}_{i,1}^k,\dot{\bar{\delta}}_{i,1}^k)
\]
evolves according to a decoupled \(2\times 2\) subsystem associated with the eigenvalue \(\lambda_i\). More precisely, \eqref{state4} yields
\begin{equation}\label{state5}
\begin{bmatrix}
\bar{\delta}_{i,1}^k \\[5pt]
\dot{\bar{\delta}}_{i,1}^k
\end{bmatrix}
=
(Q^{h}_{i})^k
\begin{bmatrix}
\bar{\delta}_{i,1}^{0} \\[5pt]
\dot{\bar{\delta}}_{i,1}^{0}
\end{bmatrix}
+
\begin{bmatrix}
\bar{v}_i^k \\[5pt]
\bar{w}_i^k
\end{bmatrix},
\end{equation}
where
\[
Q^{h}_{i}=
\begin{bmatrix}
1-\dfrac{h^2}{2}a^h\lambda_{i} & h-\dfrac{h^2}{2} b^h \\[6pt]
- h a^h\lambda_{i} & 1- h b^h
\end{bmatrix},
\quad
B^h :=
\begin{bmatrix}
\dfrac{a^h h^2}{2}-1 \\[6pt]
a^h h
\end{bmatrix},
\]
and \(\bar{\bm{v}}^{k}=[\bar{v}_{1}^{k},\dots,\bar{v}_{N}^{k}]^T\),
\(\bar{\bm{w}}^{k}=[\bar{w}_{1}^{k},\dots,\bar{w}_{N}^{k}]^T\) are defined componentwise by
\begin{equation}\label{vk_def}
\begin{bmatrix}
\bar{v}_i^k \\[5pt]
\bar{w}_i^k
\end{bmatrix}
=
\sum_{s=1}^{k}
(Q_i^h)^{k-s}\, B^h\, \bar{\alpha}_i^{s},
\qquad k=1,2,\dots .
\end{equation}

Having obtained the recurrence relation in \eqref{state5}, we analyze the matrices defining these relations. The following result shows that, for sufficiently small time step $h$, the spectral radius of the matrices associated with the positive eigenvalues of \(D^{-1}L\) (i.e., for $i = 2, \dots, N$) is strictly less than $1$.

\begin{lemma}\label{lemma:rho}
For every eigenvalue \(\lambda_i>0\) of \(D^{-1}L\), there exists \(h_i>0\) such that, for all
\(h\in(0,h_i)\),
\[
\rho(Q_i^h)<1,
\]
where \(\rho(Q_i^h)\) denotes the spectral radius of \(Q_i^h\).
\end{lemma}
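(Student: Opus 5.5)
The plan is to apply the Schur--Cohn (Jury) criterion to the $2\times2$ matrix $Q_i^h$. A real $2\times 2$ matrix has both eigenvalues strictly inside the unit disc if and only if
\[
|\det Q_i^h|<1 \quad\text{and}\quad |\operatorname{tr} Q_i^h| < 1+\det Q_i^h .
\]
So we compute the trace and determinant explicitly in terms of $a^h$, $b^h$, $\lambda_i$ and $h$, and then expand them for small $h$.

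First we compute
\[
\operatorname{tr}Q_i^h = 2-\tfrac{h^2}{2}a^h\lambda_i - hb^h,
\]
\[
\det Q_i^h = \bigl(1-\tfrac{h^2}{2}a^h\lambda_i\bigr)(1-hb^h) + ha^h\lambda_i\bigl(h-\tfrac{h^2}{2}b^h\bigr).
\]
Expanding the determinant and cancelling, the $\tfrac{h^3}{2}a^hb^h\lambda_i$ terms cancel, which gives
\[
\det Q_i^h = 1-hb^h+\tfrac{h^2}{2}a^h\lambda_i .
\]
This simplification is the crucial algebraic step. It should be verified carefully, since it decides whether the Jury conditions close.

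Next we check the three inequalities.
\begin{itemize}
\item The condition $1+\det Q_i^h - \operatorname{tr}Q_i^h>0$ becomes $h^2a^h\lambda_i>0$. This holds because $a^h>0$ and $\lambda_i>0$. It is exactly where the hypothesis $\lambda_i>0$ enters: for $\lambda_1=0$ an eigenvalue sits at $1$.
\item The condition $1+\det Q_i^h+\operatorname{tr}Q_i^h>0$ becomes $4-2hb^h>0$, that is, $hb^h<2$.
\item The condition $\det Q_i^h<1$ becomes $\tfrac{h}{2}a^h\lambda_i<b^h$.
\end{itemize}
The remaining condition $\det Q_i^h>-1$ follows from the second inequality together with $a^h>0$.

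The last step uses the asymptotics as $h\to0$, where $w_3$ dominates both denominators. These give
\[
a^h = \tfrac{w_1h^2}{2w_3}+O(h^4), \qquad b^h=\tfrac{w_2h}{w_3}+O(h^3).
\]
Hence $hb^h = O(h^2)\to 0$, so $hb^h<2$ holds for small $h$. Also $\tfrac{h}{2}a^h\lambda_i = O(h^3)$ while $b^h\sim \tfrac{w_2}{w_3}h$, so $\tfrac{h}{2}a^h\lambda_i<b^h$ holds for small $h$ as well; this uses $\lambda_i\le 2$, although any fixed $\lambda_i$ suffices. Taking $h_i$ small enough that all strict inequalities hold on $(0,h_i)$ gives $\rho(Q_i^h)<1$.

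In fact these inequalities can be checked exactly rather than asymptotically, without $O(\cdot)$ terms. Writing $c=w_3+w_2h^2+w_1h^4/4$, the condition $\tfrac{h}{2}a^h\lambda_i<b^h$ reads $\tfrac{w_1h^3\lambda_i}{4c}<\tfrac{w_2h+w_1h^3/2}{c}$. This holds for all $h>0$ since $\lambda_i\le 2$. The main obstacle is therefore only the bookkeeping of the determinant identity and the condition $hb^h<2$. The latter is equivalent to $w_2h^2+w_1h^4/2<2w_3+2w_2h^2+w_1h^4/2$, which appears to be always true. That gives an explicit $h_i$, possibly $h_i=\infty$, but the existence claim suffices.
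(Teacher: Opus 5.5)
Your proof is correct, and it takes a different route from the paper. The paper splits on the sign of the discriminant $\operatorname{tr}(Q_i^h)^2-4\det(Q_i^h)$. In the complex case it uses $\rho(Q_i^h)^2=\det(Q_i^h)<1$, which is where $\lambda_i\le 2$ enters. In the real case it uses $Q_i^h\to I$ as $h\to0$ to make both eigenvalues positive for small $h$, so that $\rho(Q_i^h)$ equals the larger root. It then writes that root explicitly as $1-\tfrac12\eta_h\bigl(1-\sqrt{1-4h^2a^h\lambda_i/\eta_h^2}\bigr)$, with $\eta_h=hb^h+\tfrac{h^2}{2}a^h\lambda_i$.

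Your Jury/Schur--Cohn argument needs no case split. It reduces everything to $p(1)=h^2a^h\lambda_i>0$, $p(-1)=4-2hb^h>0$, and $\det Q_i^h<1$. Your determinant identity and the three reductions are right. Moreover, your ``appears to be always true'' can be stated firmly:
\begin{itemize}
\item $hb^h<2$ is equivalent to $2w_3+w_2h^2>0$.
\item $\tfrac h2 a^h\lambda_i<b^h$ is equivalent to $w_1h^3\bigl(\tfrac{\lambda_i}{4}-\tfrac12\bigr)<w_2h$, which holds since $\lambda_i\le2$.
\end{itemize}
So for $\lambda_i\in(0,2]$ you actually get $\rho(Q_i^h)<1$ for every $h>0$. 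This is strictly stronger than the lemma, since the smallness of $h$ is not needed at this step. It is still needed elsewhere, for example for $q=1-hb^h\in(0,1)$ in Theorem~\ref{theorem-unconstraint}, which requires $w_1h^4/4<w_3$.

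In exchange, the paper's route gives an explicit value of $\rho(Q_i^h)$ in each regime: $\sqrt{\det Q_i^h}$ in the complex case, and the formula above in the real case. That is what you would need to make the rate $r_h$ explicit. The Jury test only certifies $\rho<1$ and does not quantify it.
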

\begin{proof} 
Since \(Q_i^h \in \mathbb{R}^{2\times 2}\), its eigenvalues can be computed explicitly as the roots of the characteristic polynomial
\[
\mu^2 - \operatorname{tr}(Q_i^h)\mu + \det(Q_i^h) = 0.
\]
By use the exact dependence of eigenvalues on the time step \(h\), we obtain the  result.
So, the eigenvalues are 
\[
\mu_{\pm}(h)
=
\frac12\left(
\operatorname{tr}(Q_i^h)\pm
\sqrt{\operatorname{tr}(Q_i^h)^2-4\det(Q_i^h)}
\right).
\]
Moreover,
\[
\operatorname{tr}(Q_i^h)=2-hb^h-\frac{h^2}{2}a^h\lambda_i,
\]
and
\begin{equation}\label{det_formula_lemma}
\det(Q_i^h)
=
1-hb^h+\frac{h^2}{2}a^h\lambda_i
=
\frac{w_3+\frac{w_1h^4}{4}(\lambda_i-1)}
{w_3+w_2h^2+\frac{w_1h^4}{4}}.
\end{equation}
Also,
\begin{equation}\label{discriminant_lemma}
\operatorname{tr}(Q_i^h)^2-4\det(Q_i^h)
=
\left(hb^h+\frac{h^2}{2}a^h\lambda_i\right)^2-4h^2a^h\lambda_i.
\end{equation}
There are two possible cases for the eigenvalues of  \(Q_i^h \in \mathbb{R}^{2\times 2}\). If
\[
\operatorname{tr}(Q^{h}_{i})^2 - 4 \det(Q^{h}_{i}) <0,
\]
then the eigenvalues of $Q^{h}_{i}$ form a complex-conjugate pair
$\mu,\bar\mu$. Hence, by Vieta's theorem
\[
\rho(Q^{h}_{i})^2 = |\mu|^2 = \mu\bar\mu = \det(Q^{h}_{i}).
\]
Since \(\lambda_i\in(0,2]\), we have \(\lambda_i-1\le1\), and thus by \eqref{det_formula_lemma},
\[
\det(Q_i^h)
\le
\frac{w_3+\frac{w_1h^4}{4}}
{w_3+w_2h^2+\frac{w_1h^4}{4}}
<1.
\]
Since the eigenvalues form a complex-conjugate pair, one has
\[
\det(Q_i^h)=|\mu|^2>0.
\]
Therefore, from \(\det(Q_i^h)<1\), it follows that
\[
\rho(Q_i^h)=|\mu|=\sqrt{\det(Q_i^h)}<1.
\]
In the second case,
\begin{equation}
    \label{eq:ei2}
    \operatorname{tr}(Q_i^h)^2-4\det(Q_i^h)\ge 0,
\end{equation}
then the eigenvalues of $Q_i^h$ are real.  Since \(Q_i^h\to I\) as \(h\to0\), both real eigenvalues converge to \(1\). Hence, for \(h>0\) sufficiently small, both eigenvalues are positive, and therefore the spectral radius coincides with the larger eigenvalue:
\[
\rho(Q_i^h)
=
\frac12\left(
\operatorname{tr}(Q_i^h)+
\sqrt{\operatorname{tr}(Q_i^h)^2-4\det(Q_i^h)}
\right).
\]
Using \eqref{discriminant_lemma}, we obtain
\[
\rho(Q_i^h)
=
1-\frac12\eta_h
\left(
1-\sqrt{1-\frac{4h^2a^h\lambda_i}{\eta_h^2}}
\right),
\]
where
\[
\eta_h:=hb^h+\frac{h^2}{2}a^h\lambda_i.
\]
Since \(a^h>0\), \(b^h>0\), and \(\lambda_i>0\), we have \(\eta_h>0\). Moreover,  by \eqref{eq:ei2}, we get
\[
0<
\frac{4h^2a^h\lambda_i}{\eta_h^2}
\le1,
\]
which implies
\[
1-\sqrt{1-\frac{4h^2a^h\lambda_i}{\eta_h^2}}>0.
\]
Therefore, \(\rho(Q_i^h)<1\).
\end{proof}


\begin{theorem}\label{theorem-unconstraint}
Suppose that Assumptions~\ref{ass:graph}--\ref{ass:tracking} hold. Then, for sufficiently small \(h>0\), there exist
\[
r_h\in(0,1),\qquad M_0^h>0,\qquad \dot M_0^h>0,
\]
such that, for all \(k\ge 1\),
\begin{equation}\label{eq:teo1}
    |\delta_{i,0}^k-\delta_{j,0}^k|
\le M_0^h r_h^k,
\,\ |\dot{\delta}_{i,0}^{k}|
\le \dot M_0^h r_h^k,\,\ i,j=1,\dots,N.
\end{equation}
In particular,
\[
\delta_{i,0}^k-\delta_{j,0}^k\to 0,
\qquad
\dot{\delta}_{i,0}^k\to 0,
\quad \text{as } k\to\infty.
\]
\end{theorem}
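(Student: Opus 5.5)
The plan is to work mode by mode in the decoupled representation \eqref{state5}, bound every mode except the consensus direction by a geometric sequence, and then return to the original coordinates. Differences $\delta_{i,0}^k-\delta_{j,0}^k$ do not see the component along $\mathbf 1$. Rate deviations $\dot\delta$ see that component only through its second entry, and that entry decays. Let $h<\min_{i\ge2}h_i$, with $h_i$ from Lemma~\ref{lemma:rho}, and in addition small enough that $0<1-hb^h<1$; this is possible because $hb^h\to0$ as $h\to0$ and $hb^h>0$.

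\textbf{Step 1 (modes $i\ge2$).} By Lemma~\ref{lemma:rho}, $\rho(Q_i^h)<1$. Set $\rho_h:=\max\{\max_{i\ge2}\rho(Q_i^h),\;1-hb^h,\;e^{-\nu h}\}<1$ and fix any $r_h\in(\rho_h,1)$. By Gelfand's formula $\rho(Q)=\lim_k\|Q^k\|^{1/k}$, there is a constant $C_h$ such that $\|(Q_i^h)^k\|\le C_h\tilde r^k$ for all $k$, where $\tilde r:=(\rho_h+r_h)/2$. Since $V^{-1}$ is a fixed matrix, Assumption~\ref{ass:tracking} gives $|\bar\alpha_i^s|\le \|V^{-1}\|\sqrt N\, d\, e^{-\nu hs}$. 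Inserting this into \eqref{vk_def} yields
\[
\|(\bar v_i^k,\bar w_i^k)\|\le C_h\|B^h\|\,\|V^{-1}\|\sqrt N\, d\sum_{s=1}^k \tilde r^{\,k-s}e^{-\nu hs}\le C'_h\, k\,\tilde r^{\,k}\le C''_h\, r_h^k .
\]
The last inequality holds because $k(\tilde r/r_h)^k$ is bounded. Together with $\|(Q_i^h)^k\|\le C_h r_h^k$ applied to the initial data, \eqref{state5} shows that $|\bar\delta^k_{i,1}|$ and $|\dot{\bar\delta}^k_{i,1}|$ are at most a constant times $r_h^k$ for every $i\ge2$.

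\textbf{Step 2 (mode $i=1$, where $\lambda_1=0$).} Here $Q_1^h=\begin{bmatrix}1 & h-\frac{h^2}{2}b^h\\ 0 & 1-hb^h\end{bmatrix}$ is upper triangular. The second row gives the scalar recursion $\dot{\bar\delta}^k_{1,1}=(1-hb^h)\dot{\bar\delta}^{k-1}_{1,1}+a^hh\,\bar\alpha_1^k$. Since $1-hb^h<1$ and the forcing decays like $e^{-\nu hk}$, the same convolution estimate as in Step~1 gives $|\dot{\bar\delta}^k_{1,1}|\le C r_h^k$.

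The first component $\bar\delta^k_{1,1}$ (the common drift along $\mathbf 1$) need not converge, and we do not need it to. Because the first column of $V$ is $\mathbf 1$, we have
\[
\bm\Delta^k-\bar\delta^k_{1,1}\mathbf 1=\sum_{l\ge2}\bar\delta^k_{l,1}V_{:,l}.
\]
Hence $|\delta^k_{i,1}-\delta^k_{j,1}|\le 2\|V\|\sum_{l\ge2}|\bar\delta^k_{l,1}|\le M r_h^k$. Similarly $\dot{\bm\Delta}^k=V\dot{\bar{\bm\Delta}}^k$ gives $|\dot\delta^k_{i,1}|\le \dot M r_h^k$, since all components of $\dot{\bar{\bm\Delta}}^k$, including $i=1$, are now bounded by a constant times $r_h^k$.

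\textbf{Step 3 (back to the $\tau=0$ variables).} By the shift in Problem~\ref{DiscreteUn}, $\delta^k_{i,0}=\delta^{k-1}_{i,1}-\alpha_i^k$ and $\dot\delta^k_{i,0}=\dot\delta^{k-1}_{i,1}$. Therefore
\[
|\delta^k_{i,0}-\delta^k_{j,0}|\le M r_h^{k-1}+2de^{-\nu hk}\le M_0^h r_h^k,
\]
using $e^{-\nu h}\le r_h$, and $|\dot\delta^k_{i,0}|\le \dot M r_h^{k-1}=:\dot M_0^h r_h^k$. This establishes \eqref{eq:teo1}.

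\textbf{Main obstacle.} The delicate point is the treatment of the zero eigenvalue. $Q_1^h$ has eigenvalue exactly $1$, so the argument must avoid any global contraction claim. It must also check that the disturbance $\bar\alpha_1$ does not excite the velocity channel in a non-decaying way, which is why its second row has to be treated separately as a scalar recursion. A secondary technical point is absorbing the polynomial factor $k$ that appears in the convolution sums; this is why $r_h$ is taken strictly larger than $\rho_h$, at the cost of an $h$-dependent constant.
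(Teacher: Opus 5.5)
Your proposal is correct and follows essentially the same route as the paper: Lemma~\ref{lemma:rho} plus Gelfand's formula for the modes $i\ge 2$, a separate treatment of the zero mode, reconstruction of the differences using that the first column of $V$ is $\mathbf 1$, and the shift conditions to pass to the $\tau=0$ variables. The differences are minor. You read off the decaying velocity component of the zero mode directly from the triangular second row of $Q_1^h$, whereas the paper uses the Cayley--Hamilton decomposition $(Q_1^h)^k=P+q^kF$. You also absorb the convolution sums through an intermediate rate $\tilde r$ and a factor $k$, whereas the paper sums the geometric series exactly using $r_h>e^{-\nu h}$.
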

\begin{proof}
We first establish convergence results for the auxiliary variables defined in \eqref{varsub}.
In particular, we distinguish two cases based on the spectrum of \(D^{-1}L\): strictly positive eigenvalues and the zero eigenvalue. For the first case, Lemma~\ref{lemma:rho} implies exponential convergence of the corresponding variables to zero. For the second case, we show convergence to a constant.
Combining these results and mapping them back to the original variables concludes the proof.

Since the communication graph is undirected and connected, the eigenvalues of \(D^{-1}L\) satisfy
\[
0=\lambda_1<\lambda_2\le \dots \le \lambda_N\le 2.
\]
Hence, for \(i=2,\dots,N\), one has \(\lambda_i>0\), and Lemma~\ref{lemma:rho} implies that, for sufficiently small \(h>0\),
\[
\rho(Q_i^h)<1,\qquad i=2,\dots,N.
\]
Fix such an \(h>0\), and choose \(r_h\in(0,1)\) such that
\[
\rho(Q_i^h)<r_h<1,\,\, i=2,\dots,N,
\,\,
r_h>e^{-\nu h},
\,\,
r_h>1-hb^h.
\]
By Gelfand's formula,
\[
\lim_{k\to\infty}\|(Q_i^h)^k\|_\infty^{1/k}
=
\rho(Q_i^h)
<r_h,
\qquad i=2,\dots,N.
\]
Therefore, for each \(i=2,\dots,N\), there exists a constant \(C_{i,\infty}^h>0\) such that
\[
\|(Q_i^h)^k\|_\infty \le C_{i,\infty}^h\, r_h^k,
\qquad \forall k\in\mathbb N.
\]
Since the set \(\{2,\dots,N\}\) is finite, defining
\[
C_\infty^h:=\max_{i=2,\dots,N} C_{i,\infty}^h,
\]
we obtain
\[
\|(Q_i^h)^k\|_\infty \le C_\infty^h\, r_h^k,
\qquad \forall k\in\mathbb N,\quad i=2,\dots,N.
\]

For \(i=2,\dots,N\), relation \eqref{state5} gives
\[
\begin{bmatrix}
\bar{\delta}_{i,1}^k \\[5pt]
\dot{\bar{\delta}}_{i,1}^k
\end{bmatrix}
=
(Q_i^h)^k
\begin{bmatrix}
\bar{\delta}_{i,1}^{0} \\[5pt]
\dot{\bar{\delta}}_{i,1}^{0}
\end{bmatrix}
+
\begin{bmatrix}
\bar{v}_i^k \\[5pt]
\bar{w}_i^k
\end{bmatrix}.
\]
Using the bound above, \eqref{vk_def}, submultiplicativity of induced norms, we obtain
\[
\left\|
\begin{bmatrix}
\bar{\delta}_{i,1}^k \\[5pt]
\dot{\bar{\delta}}_{i,1}^k
\end{bmatrix}
\right\|_\infty
\le
C_\infty^h r_h^k
\left\|
\begin{bmatrix}
\bar{\delta}_{i,1}^{0} \\[5pt]
\dot{\bar{\delta}}_{i,1}^{0}
\end{bmatrix}
\right\|_\infty
+
C_\infty^h\|B^h\|_\infty
\sum_{s=1}^{k}
r_h^{k-s}|\bar{\alpha}_i^s|.
\]
Moreover, by  Assumption~\ref{ass:tracking} for \(i=1,\dots,N\),
\[
|\bar{\alpha}_i^s|
\le \|\bar{\bm{\alpha}}^s\|_\infty
\le \|V^{-1}\|_\infty \|\bm{\alpha}^s\|_\infty
\le \|V^{-1}\|_\infty d\,e^{-\nu sh}.
\]
Therefore,
\[
\begin{split}
\left\|
\begin{bmatrix}
\bar{\delta}_{i,1}^k \\[5pt]
\dot{\bar{\delta}}_{i,1}^k
\end{bmatrix}
\right\|_\infty
&\le
C_\infty^h r_h^k \|V^{-1}\|_\infty
\max\{\|\bm{\Delta}^0\|_\infty,\|\dot{\bm{\Delta}}^0\|_\infty\}
\\
&\quad+
C_\infty^h\|B^h\|_\infty \|V^{-1}\|_\infty d
\sum_{s=1}^{k}
r_h^{k-s}e^{-\nu sh}.
\end{split}
\]
Since \(r_h>e^{-\nu h}\),
\[
\sum_{s=1}^{k}
r_h^{k-s}e^{-\nu sh}
=
r_h^k\sum_{s=1}^{k}\left(\frac{e^{-\nu h}}{r_h}\right)^s
\le
r_h^k\frac{e^{-\nu h}}{r_h-e^{-\nu h}}.
\]
Hence, for all \(i=2,\dots,N\) and all \(k\ge 0\),
\begin{equation}\label{eq:disagreement_mode_bound}
\left\|
\begin{bmatrix}
\bar{\delta}_{i,1}^k \\[5pt]
\dot{\bar{\delta}}_{i,1}^k
\end{bmatrix}
\right\|_\infty
\le
A_1^h
\max\{\|\bm{\Delta}^0\|_\infty,\|\dot{\bm{\Delta}}^0\|_\infty\}\,r_h^k
+
A_2^h\, d\, r_h^k,
\end{equation}
where
\[
A_1^h:=C_\infty^h\|V^{-1}\|_\infty,
\,\,
A_2^h:=C_\infty^h\|B^h\|_\infty \|V^{-1}\|_\infty
\frac{e^{-\nu h}}{r_h-e^{-\nu h}}.
\]

We now consider the case \(i=1\), corresponding to \(\lambda_1=0\). In this case,
\[
Q_1^h=
\begin{bmatrix}
1 & h-\dfrac{h^2}{2}b^h\\[6pt]
0 & 1-hb^h
\end{bmatrix},
\]
so \(Q_1^h\in\mathbb R^{2\times2}\) has two distinct eigenvalues
\[
\mu_1=1,
\qquad
\mu_2=q:=1-hb^h,
\qquad 0<q<1.
\]
Its characteristic polynomial is
\[
\chi(x)=(x-1)(x-q).
\]
By the Cayley--Hamilton theorem,
\[
(Q_1^h)^2-(1+q)Q_1^h+qI=0.
\]
Hence, for every \(k\in\mathbb N\), \((Q_1^h)^k\) can be written in the form
\[
(Q_1^h)^k=c_k Q_1^h+d_k I.
\]
Imposing this identity at the eigenvalues \(1\) and \(q\) gives
\[
c_k+d_k=1,
\qquad
c_k q+d_k=q^k,
\]
and therefore
\[
(Q_1^h)^k
=
\frac{1}{1-q}(Q_1^h-qI)+\frac{q^k}{1-q}(I-Q_1^h).
\]
Setting
\[
P:=\frac{1}{1-q}(Q_1^h-qI),
\qquad
F:=\frac{1}{1-q}(I-Q_1^h),
\]
we have
\begin{equation}\label{eq:Q_power_decomp_thm_refined}
(Q_1^h)^k=P+q^kF.
\end{equation}
Because $0<q<1$, we deduce that
\[
\lim_{k\to\infty}(Q_1^h)^k=P.
\]
Since the second row of \(P\) is zero, \eqref{eq:Q_power_decomp_thm_refined} implies
\[
\begin{split}
&\left|
\left[
(Q_1^h)^k
\begin{bmatrix}
\bar{\delta}_{1,1}^{0}\\[5pt]
\dot{\bar{\delta}}_{1,1}^{0}
\end{bmatrix}
\right]_2
\right|
\le
q^k\|F\|_\infty
\left\|
\begin{bmatrix}
\bar{\delta}_{1,1}^{0}\\[5pt]
\dot{\bar{\delta}}_{1,1}^{0}
\end{bmatrix}
\right\|_\infty
\\&\quad\le
r_h^k\|F\|_\infty\|V^{-1}\|_\infty
\max\{\|\bm{\Delta}^0\|_\infty,\|\dot{\bm{\Delta}}^0\|_\infty\}.
\end{split}
\]
Also, substituting \eqref{eq:Q_power_decomp_thm_refined} into \eqref{vk_def}, we obtain
\[
\begin{bmatrix}
\bar{v}_1^k \\[5pt]
\bar{w}_1^k
\end{bmatrix}
=
PB^h\sum_{s=1}^{k}\bar{\alpha}_1^s
+
FB^h\sum_{s=1}^{k}q^{k-s}\bar{\alpha}_1^s.
\]
The first term converges as \(k\to\infty\), since \(\sum_{s=1}^\infty \bar{\alpha}_1^s\) converges absolutely, while the second component of the first term is identically zero. Moreover,
\[
\sum_{s=1}^{k} q^{k-s}|\bar{\alpha}_1^s|
\le
\|V^{-1}\|_\infty d
\sum_{s=1}^{k} q^{k-s}e^{-\nu sh}
\le C_q^h r_h^k
\]
for some constant \(C_q^h>0\), because \(q<r_h\) and \(e^{-\nu h}<r_h\). Therefore, there exists \(\widetilde M^h>0\) such that
\[
|\dot{\bar{\delta}}_{1,1}^{k}|
\le
\widetilde M^h r_h^k.
\]

Returning to the original coordinates, since
\[
\dot{\bm{\Delta}}^k = V\dot{\bar{\bm{\Delta}}}^k,
\]
we obtain
\[
|\dot{\delta}_{i,1}^{k}|
\le
\|V\|_\infty \max_{s=1,\dots,N}|\dot{\bar{\delta}}_{s,1}^{k}|,
\qquad i=1,\dots,N.
\]
Using \eqref{eq:disagreement_mode_bound} for \(s=2,\dots,N\) and the previous estimate for \(s=1\), we conclude that there exists \(\dot M^h>0\) such that
\begin{equation}\label{eq:rate_bound_index1}
|\dot{\delta}_{i,1}^{k}|
\le
\dot M^h r_h^k,
\qquad i=1,\dots,N.
\end{equation}

Next, since \(\bar{\delta}_{1,1}^k\) is common to all agents in the reconstruction of \(\bm{\Delta}^k\), and the first column of \(V\) is \(\mathbf 1=[1,\dots,1]^T\), we have
\[
\delta_{i,1}^k-\delta_{j,1}^k
=
\sum_{s=2}^{N}(v_{i,s}-v_{j,s})\,\bar{\delta}_{s,1}^{k},
\qquad i,j=1,\dots,N.
\]
Hence, by \eqref{eq:disagreement_mode_bound},
\[
\begin{split}
|\delta_{i,1}^k-\delta_{j,1}^k|
&\le
\sum_{s=2}^{N}|v_{i,s}-v_{j,s}|\,|\bar{\delta}_{s,1}^{k}|
\\
&\le
2\|V\|_\infty \max_{s=2,\dots,N} |\bar{\delta}_{s,1}^{k}|
\\
&\le
M_1^h r_h^k,
\qquad i,j=1,\dots,N,
\end{split}
\]
for some constant \(M_1^h>0\).

Finally, using the shift conditions
\[
\delta_{i,0}^k=\delta_{i,1}^{k-1}-\alpha_i^{k},
\qquad
\dot\delta_{i,0}^k=\dot\delta_{i,1}^{k-1},
\qquad k\ge 1,
\]
we obtain
\[
|\dot\delta_{i,0}^k|
=
|\dot\delta_{i,1}^{k-1}|
\le
\dot M^h r_h^{k-1}
=
\frac{\dot M^h}{r_h}\,r_h^k,
\qquad i=1,\dots,N,
\]
and
\[
\begin{split}
|\delta_{i,0}^k-\delta_{j,0}^k|
&\le
|\delta_{i,1}^{k-1}-\delta_{j,1}^{k-1}|+|\alpha_i^k|+|\alpha_j^k|
\\
&\le
M_1^h r_h^{k-1}+2d\,e^{-\nu hk}
\\
&\le
\left(\frac{M_1^h}{r_h}+2d\right) r_h^k,
\qquad i,j=1,\dots,N,
\end{split}
\]
since \(e^{-\nu h}<r_h\). Therefore, the claim follows with
\[
\dot M_0^h:=\frac{\dot M^h}{r_h},
\qquad
M_0^h:=\frac{M_1^h}{r_h}+2d.
\]
This completes the proof.
\end{proof}

\begin{lemma}\label{lem:uniform_delta_difference_bound}
Suppose that Assumptions~\ref{ass:graph}--\ref{ass:tracking} hold. Then,  for sufficiently small \(h>0\), there exist constants
\[
S_1^h>0,\qquad S_2^h>0,
\]
such that, for every \(i,j=1,\dots,N\) and every \(\ell\ge 1\),
\[
|\delta_{i,1}^{\ell}-\delta_{j,1}^{\ell}|
\le
S_1^h \max\{\|\bm{\Delta}^0\|_\infty,\|\dot{\bm{\Delta}}^0\|_\infty\}
+
S_2^h d.
\]
\end{lemma}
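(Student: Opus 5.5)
This bound is essentially a byproduct of the proof of Theorem~\ref{theorem-unconstraint}. There, the disagreement $\delta_{i,1}^\ell-\delta_{j,1}^\ell$ is controlled purely by the disagreement modes $s=2,\dots,N$. The only extra work is to track how the constants depend on the initial data and on $d$. So I would fix $h$ small enough that Lemma~\ref{lemma:rho} applies to every $\lambda_s$ with $s=2,\dots,N$. I would then choose $r_h$ and $C_\infty^h$ exactly as in that proof, and note that $C_\infty^h$, $\|B^h\|_\infty$, $\|V\|_\infty$ and $\|V^{-1}\|_\infty$ depend only on $h$, the weights and the graph. None of them depends on the initial conditions or on $d$.

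The first step uses the representation from that proof. Since the first column of $V$ is $\mathbf 1$, the common mode $\bar\delta_{1,1}^\ell$ cancels, and
\[
\delta_{i,1}^\ell-\delta_{j,1}^\ell=\sum_{s=2}^N (v_{i,s}-v_{j,s})\,\bar\delta_{s,1}^\ell .
\]
Hence
\[
|\delta_{i,1}^\ell-\delta_{j,1}^\ell|\le 2\|V\|_\infty\max_{s\ge2}|\bar\delta_{s,1}^\ell| .
\]

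The second step invokes \eqref{eq:disagreement_mode_bound}, which gives, for $s\ge 2$ and all $\ell$,
\[
|\bar\delta_{s,1}^\ell|\le A_1^h\max\{\|\bm\Delta^0\|_\infty,\|\dot{\bm\Delta}^0\|_\infty\}r_h^\ell+A_2^h d\,r_h^\ell .
\]
Since $r_h\in(0,1)$, I would bound $r_h^\ell\le 1$ for all $\ell\ge1$. This yields the claim with
\[
S_1^h:=2\|V\|_\infty A_1^h,\qquad S_2^h:=2\|V\|_\infty A_2^h .
\]
Both constants are positive because $C_\infty^h>0$ and $r_h>e^{-\nu h}$.

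There is little genuine obstacle. The one point to check carefully is that the constants $A_1^h$ and $A_2^h$ in \eqref{eq:disagreement_mode_bound} are indeed independent of the initial data and of $d$. They are: $d$ appears only as a linear factor, through Assumption~\ref{ass:tracking} in the bound $|\bar\alpha_s^{\ell}|\le\|V^{-1}\|_\infty d e^{-\nu \ell h}$, and the geometric sum $\sum_{s\le \ell} r_h^{\ell-s}e^{-\nu sh}$ was bounded uniformly in $\ell$. I would also remark that the bound is uniform in $\ell$, rather than exponentially decaying, which is what will be needed later to verify the constraints of the original DMPC problem.
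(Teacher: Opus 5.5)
Your proposal is correct and matches the paper's proof essentially verbatim. You cancel the common mode using $\mathbf 1$ as the first column of $V$, bound the disagreement by $2\|V\|_\infty\max_{s\ge 2}|\bar\delta_{s,1}^\ell|$, and apply \eqref{eq:disagreement_mode_bound} with $r_h^\ell\le 1$, obtaining the same constants $S_1^h=2\|V\|_\infty A_1^h$ and $S_2^h=2\|V\|_\infty A_2^h$. Your explicit check that $A_1^h$ and $A_2^h$ depend on neither the initial data nor $d$ is a useful addition, since the paper leaves it implicit and Theorem~\ref{theorem-main} needs it.
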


\begin{proof}
Under Assumptions~\ref{ass:graph}--\ref{ass:tracking}, for sufficiently small \(h>0\), the proof of Theorem~\ref{theorem-unconstraint} shows that there exist \(r_h\in(0,1)\), \(A_1^h>0\), and \(A_2^h>0\) such that for $s=2,\dots,N$,
\[
\left\|
\begin{bmatrix}
\bar{\delta}_{s,1}^\ell \\[5pt]
\dot{\bar{\delta}}_{s,1}^\ell
\end{bmatrix}
\right\|_\infty
\le
A_1^h \max\{\|\bm{\Delta}^0\|_\infty,\|\dot{\bm{\Delta}}^0\|_\infty\} r_h^\ell
+
A_2^h d\, r_h^\ell,
\]
for every \(\ell\ge 1\); see \eqref{eq:disagreement_mode_bound}. In particular,
\[
|\bar{\delta}_{s,1}^\ell|
\le
A_1^h \max\{\|\bm{\Delta}^0\|_\infty,\|\dot{\bm{\Delta}}^0\|_\infty\}
+
A_2^h d,
\quad s=2,\dots,N.
\]
Using the reconstruction formula
\[
\delta_{i,1}^\ell-\delta_{j,1}^\ell
=
\sum_{s=2}^{N}(v_{i,s}-v_{j,s})\,\bar{\delta}_{s,1}^{\ell},
\]
and arguing exactly as in the proof of Theorem~\ref{theorem-unconstraint}, we obtain
\[
\begin{split}
&|\delta_{i,1}^{\ell}-\delta_{j,1}^{\ell}|
\le
2\|V\|_\infty \max_{s=2,\dots,N} |\bar{\delta}_{s,1}^{\ell}|
\\
&\qquad\le
2\|V\|_\infty A_1^h
\max\{\|\bm{\Delta}^0\|_\infty,\|\dot{\bm{\Delta}}^0\|_\infty\}
+
2\|V\|_\infty A_2^h\, d.
\end{split}
\]
Therefore the claim holds with
\[
S_1^h:=2\|V\|_\infty A_1^h,
\qquad
S_2^h:=2\|V\|_\infty A_2^h.
\]
\end{proof}


\subsection{Original Problem}\label{sec:original}

e now return to the original constrained problem.

\begin{problem}\label{Discrete}
At each sampling instant \(t_k\), each UAV \(i\) solves the following minimization problem:
\begin{equation*}
\begin{aligned}
\min_{\mathbf{z}^{k}_{i}\in A_{i}^k} \quad 
& J_i\!\left(\mathbf{z}^{k}_{i}, \bm{\delta}^{k-1}_{-i}\right)
\\
\text{subject to} \quad
& \delta_{i,1}^k
=
\delta_{i,0}^k
+
h\dot\delta_{i,0}^k
+
\dfrac{h^2}{2}u_{i,0}^k,
\\[3pt]
& \dot\delta_{i,1}^k
=
\dot\delta_{i,0}^k
+
hu_{i,0}^k,
\\[3pt]
& \delta^{k}_{i,0}
=
\delta^{k-1}_{i,1}
-
\alpha_{i}^{k},
\\[3pt]
& \dot{\delta}^{k}_{i,0}
=
\dot{\delta}^{k-1}_{i,1},
\end{aligned}
\end{equation*}
for \(k=1,2,\dots\).
\end{problem}
We next show that, under suitable smallness assumptions, the minimizer of the unconstrained problem satisfies the constraints of Problem~\ref{Discrete} at every step. Consequently, along the resulting trajectory, the constrained and unconstrained problems coincide.
\begin{theorem}\label{theorem-main}
Suppose that Assumptions~\ref{ass:graph}--\ref{ass:tracking} hold, and assume that the initial predicted rates satisfy
\[
\dot{\delta}_{i,\min}\le \dot{\delta}^{0}_{i,1}\le \dot{\delta}_{i,\max},
\qquad i=1,\dots,N.
\]
Then, for sufficiently small \(h>0\), there exist constants \(\nu_h>0\) and \(d_h>0\) such that, if
\[
\max\{\|\bm{\Delta}^0\|_\infty,\|\dot{\bm{\Delta}}^0\|_\infty\}<\nu_h,
\qquad
0<d<d_h,
\]
then, for all \(i=1,\dots,N\) and all \(k\ge 1\),
\[
\dot{\delta}_{i,\min}\le \dot{\delta}^{k}_{i,1}\le \dot{\delta}_{i,\max},
\qquad
|u^k_{i,0}|\le \ddot{\gamma}_{i,\max},
\]
and the estimates in \eqref{eq:teo1} hold.
\end{theorem}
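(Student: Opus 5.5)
We prove that the trajectory generated by the unconstrained scheme of Section~\ref{sec:unconstraint} never activates the constraints of Problem~\ref{Discrete}. Since each local problem is strictly convex, the unconstrained minimizer \eqref{control} is then also the unique minimizer of Problem~\ref{Discrete} at every step. We argue by induction on $k$: if the two trajectories coincide up to step $k-1$, the data $\bm{\Delta}^{k-1},\dot{\bm{\Delta}}^{k-1},\bm{\alpha}^k$ entering step $k$ agree. If the unconstrained minimizer at step $k$ is admissible, the two solutions agree at step $k$ as well. Once the trajectories coincide for all $k$, the estimates \eqref{eq:teo1} follow directly from Theorem~\ref{theorem-unconstraint}.

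The key is a uniform-in-$k$ bound on the quantities in the constraints. Write $m_0:=\max\{\|\bm{\Delta}^0\|_\infty,\|\dot{\bm{\Delta}}^0\|_\infty\}$. For the input, \eqref{control} gives
\[
|u_{i,0}^k|\le a^h\max_{j\in\mathcal N_i}|\delta^{k-1}_{i,1}-\delta^{k-1}_{j,1}|+b^h|\dot\delta^{k-1}_{i,1}|+a^h|\alpha_i^k|.
\]
Lemma~\ref{lem:uniform_delta_difference_bound} bounds the first term by $a^h(S_1^hm_0+S_2^hd)$, including the case $k-1=0$, where the bound is trivially $2m_0$. Assumption~\ref{ass:tracking} bounds the last term by $a^hd$. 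For the middle term, the estimate \eqref{eq:rate_bound_index1} in the proof of Theorem~\ref{theorem-unconstraint} is linear in the data. Tracking constants there, via the bound on $(Q_i^h)^k$, the decomposition $(Q_1^h)^k=P+q^kF$, and the geometric sums, gives $|\dot\delta^{\ell}_{i,1}|\le (T_1^hm_0+T_2^hd)$ for all $\ell\ge0$, with $T_1^h,T_2^h$ depending only on $h$, the weights, and the graph. Hence $|u^k_{i,0}|\le R_1^hm_0+R_2^hd$ uniformly in $k$. Choosing $\nu_h,d_h$ so that this bound is below $\min_i\ddot\gamma_{i,\max}$ gives the input constraint.

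The rate constraint is the main obstacle, because $\dot\delta_{i,\min}$ may be $0$ or close to $0$. The proposal is to exploit $\dot\delta_{i,\min}<0<\dot\delta_{i,\max}$, which holds since $\dot\gamma_{i,\min}<1<\dot\gamma_{i,\max}$ by \eqref{gammadotconst} and the nominal pace being feasible. Set $\epsilon:=\min_i\min\{-\dot\delta_{i,\min},\dot\delta_{i,\max}\}>0$ and require $T_1^h\nu_h+T_2^hd_h<\epsilon$. Then $|\dot\delta^k_{i,1}|<\epsilon$ for all $k$, which places the rate strictly inside the admissible interval. The hypothesis on $\dot\delta^0_{i,1}$ covers the base case $k=0$. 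If instead one only assumes $\dot\delta_{i,\min}\le0$, the fallback is a convex-combination argument. Since $1-hb^h\in(0,1)$ for small $h$, one has
\[
\dot\delta^k_{i,1}=(1-hb^h)\dot\delta^{k-1}_{i,1}+h\,a^h\bigl(\alpha_i^k-(D^{-1}L\bm{\Delta}^{k-1})_i\bigr),
\]
and the forcing term must be kept small relative to the margin. This is where the smallness of $d$ and $m_0$ is really used, and the fallback is the step that needs the most care.

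Finally, $h$ is fixed first, small enough for Lemma~\ref{lemma:rho}, Theorem~\ref{theorem-unconstraint}, and Lemma~\ref{lem:uniform_delta_difference_bound}, and with $1-hb^h>0$. The constants $\nu_h,d_h$ are then chosen to satisfy both smallness conditions. Under these choices the induction closes: at every step the unconstrained minimizer is feasible, Problems~\ref{DiscreteUn} and~\ref{Discrete} generate the same sequence, and the exponential estimates \eqref{eq:teo1} transfer to the constrained scheme.
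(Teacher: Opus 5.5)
Your argument is correct, but it enforces the constraints by a different mechanism than the paper.

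\textbf{The paper's route.} The paper never bounds the rates themselves. It shows the admissible interval is forward invariant: it writes $\dot\delta^k_{i,1}=(1-hb^h)\dot\delta^{k-1}_{i,1}+hb^h\,(\xi_i^{k-1}/b^h)$ as a convex combination and requires $|\xi_i^{k-1}|\le b^h\min\{\dot\delta_{i,\max},-\dot\delta_{i,\min}\}$, controlling $\xi_i^{k-1}$ only through the disagreement bound of Lemma~\ref{lem:uniform_delta_difference_bound}. Its input estimate $|u^k_{i,0}|\le\eta_i^{k-1}+b^h\max\{|\dot\delta_{i,\min}|,\dot\delta_{i,\max}\}+a^hd$ uses the rate constraint at step $k-1$. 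That is why the paper needs the induction, the initial-rate hypothesis, and $h$ small enough that $b^h\max\{|\dot\delta_{i,\min}|,\dot\delta_{i,\max}\}<\ddot\gamma_{i,\max}$.

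\textbf{Your route.} You observe that the constant $\dot M^h$ in \eqref{eq:rate_bound_index1} is linear in $(m_0,d)$. This is true: the disagreement-mode bound, the $q^kF$ part of $(Q_1^h)^k$ (the second row of $P$ vanishes), and the geometric sums all scale linearly. The constants depend on $h$, the weights, the graph, and also $\nu$, which you omitted but which is harmless. Hence the whole unconstrained trajectory has rates in a small ball and inputs bounded by $R_1^hm_0+R_2^hd$. Feasibility then holds at every step with no invariance step and no extra smallness of $h$ for the input bound, and the initial-rate hypothesis becomes automatic. The price is that you rely on a quantitative form of Theorem~\ref{theorem-unconstraint} that the paper never states, and you confine the rates near $0$. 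The paper needs only the disagreement bound plus the convex-combination structure, and lets the rates range over the whole admissible interval.

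\textbf{Your fallback.} It is exactly the paper's route, and it cannot handle $\dot\delta_{i,\min}=0$. Take $\bm{\alpha}\equiv 0$, $\dot{\bm{\Delta}}^0=0$, and $\bm{\Delta}^0$ with agent $i$ strictly ahead of all others. The unconstrained update then gives $\dot\delta^1_{i,1}=-ha^h(D^{-1}L\bm{\Delta}^0)_i<0$ for arbitrarily small data. So the strict condition $\dot\delta_{i,\min}<0<\dot\delta_{i,\max}$ is genuinely needed by this strategy; the paper's margin must be positive too. Note that this condition is a design assumption, not a consequence of \eqref{gammadotconst}, which only bounds $\dot\gamma_{i,\min}$ from below.
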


\begin{proof}
For each \(i=1,\dots,N\), let
\[
\dot{\bar{\delta}}_i:=\max\{|\dot{\delta}_{i,\min}|,\dot{\delta}_{i,\max}\}.
\]
Also define
\[
\eta_i^{k-1}
:=
\frac{a^h}{|\mathcal N_i|}\sum_{j\in\mathcal N_i}
\big|\delta^{k-1}_{i,1}-\delta^{k-1}_{j,1}\big|.
\]

We first derive sufficient conditions for the control and rate constraints.

From \eqref{control} and the triangle inequality,
\[
|u_{i,0}^k|
\le
\eta_i^{k-1}
+
b^h|\dot\delta^{k-1}_{i,1}|
+
a^h|\alpha_i^{k}|.
\]
Hence, if
\[
\dot{\delta}_{i,\min}\le \dot{\delta}^{k-1}_{i,1}\le \dot{\delta}_{i,\max},
\qquad
|\alpha_i^{k}|\le d,
\]
then
\[
|u_{i,0}^k|
\le
\eta_i^{k-1}
+
b^h\bar{\dot\delta}_i
+
a^h d.
\]
Therefore, a sufficient condition for
\[
|u_{i,0}^k|\le \ddot{\gamma}_{i,\max}
\]
is
\begin{equation}\label{u_constraint_suff_thm_refined}
\eta_i^{k-1}
\le
\ddot{\gamma}_{i,\max}-b^h\bar{\dot\delta}_i-a^h d.
\end{equation}

Next, using \eqref{control}, we write
\[
\dot{\delta}^{k}_{i,1}
=
\dot{\delta}^{k-1}_{i,1}
+
h\,u^k_{i,0}
=
(1-hb^h)\dot{\delta}^{k-1}_{i,1}
+
h\,\xi_i^{k-1},
\]
where
\[
\xi_i^{k-1}
:=
-\frac{a^h}{|\mathcal N_i|}\sum_{j\in\mathcal N_i}
\big(\delta^{k-1}_{i,1}-\delta^{k-1}_{j,1}\big)
+
a^h\alpha_i^{k}.
\]
Assume that \(0<hb^h<1\) and
\[
\dot{\delta}_{i,\min}\le \dot{\delta}^{k-1}_{i,1}\le \dot{\delta}_{i,\max}.
\]
Then a sufficient condition for
\[
\dot{\delta}_{i,\min}\le \dot{\delta}^{k}_{i,1}\le \dot{\delta}_{i,\max}
\]
is
\[
b^h\dot{\delta}_{i,\min}
\le
\xi_i^{k-1}
\le
b^h\dot{\delta}_{i,\max}.
\]
Moreover, if \(|\alpha_i^{k}|\le d\), then
\[
|\xi_i^{k-1}|
\le
\eta_i^{k-1}+a^h d.
\]
Therefore, a sufficient condition for the previous double inequality is
\begin{equation}\label{rate_constraint_suff_thm_refined}
\eta_i^{k-1}
\le
b^h\min\{\dot{\delta}_{i,\max},-\dot{\delta}_{i,\min}\}-a^h d.
\end{equation}

We now use Lemma~\ref{lem:uniform_delta_difference_bound}. For every \(i=1,\dots,N\) and every \(k\ge 1\),
\[
\eta_i^{k-1}
\le
a^h
\max_{j\in\mathcal N_i}
\big|\delta^{k-1}_{i,1}-\delta^{k-1}_{j,1}\big|
\]
and therefore
\[
\eta_i^{k-1}
\le
a^h\Bigl(
S_1^h\max\{\|\bm{\Delta}^0\|_\infty,\|\dot{\bm{\Delta}}^0\|_\infty\}
+
S_2^h d
\Bigr).
\]
Since \(a^h=O(h^2)\) and \(b^h=O(h)\), for sufficiently small \(h>0\) one has \(0<hb^h<1\), and the quantities
\[
\ddot{\gamma}_{i,\max}-b^h\bar{\dot\delta}_i-a^h d,
\qquad
b^h\min\{\dot{\delta}_{i,\max},-\dot{\delta}_{i,\min}\}-a^h d,
\]
are positive. Hence there exist \(\nu_h>0\) and \(d_h>0\) such that, whenever
\[
\max\{\|\bm{\Delta}^0\|_\infty,\|\dot{\bm{\Delta}}^0\|_\infty\}<\nu_h,
\qquad
0<d<d_h,
\]
both sufficient conditions \eqref{u_constraint_suff_thm_refined} and \eqref{rate_constraint_suff_thm_refined} hold for all \(i=1,\dots,N\) and all \(k\ge 1\).

Since
\[
\dot{\delta}_{i,\min}\le \dot{\delta}^{0}_{i,1}\le \dot{\delta}_{i,\max},
\qquad i=1,\dots,N,
\]
it follows by induction that
\[
\dot{\delta}_{i,\min}\le \dot{\delta}^{k}_{i,1}\le \dot{\delta}_{i,\max},
\qquad i=1,\dots,N,\quad k\ge 1,
\]
and simultaneously
\[
|u_{i,0}^k|\le \ddot{\gamma}_{i,\max},
\qquad i=1,\dots,N,\quad k\ge 1.
\]
This completes the proof.
    \end{proof}

\section{Simulations}\label{sec:sim}

We evaluate the proposed framework through a series of high-fidelity simulations, focusing on scalability, computational efficiency, practical implementation of the method, and validation of the theoretical results.
We consider multiple simulation scenarios. First, we perform a sensitivity  (see Section~\ref{sec:MPC-horizon}) analysis with respect to the MPC prediction horizon $K$ for varying numbers of UAVs. This study highlights the scalability of the proposed method and provides empirical validation of the theoretical results, in particular the exponential stability for $K = 1$.
As a second scenario, we consider a challenging coordination setting that highlights the advantages of the proposed game/optimization-based formulation (see Section~\ref{sec:seqvsgame}). 

The simulations were implemented using RotorPy\cite{folk2023rotorpy}, an open-source simulator optimized for multi-rotor research. For all test cases, the Bitcraze Crazyflie was selected as the primary aerial platform. The RotorPy framework allows for precise integration of the platform's equations of motion while accounting for critical aerodynamic effects.

All simulations were conducted on a GIGABYTE G5 MF5 laptop equipped 
with an Intel Core i7-13620H processor (16 cores) and 32 GiB of RAM, 
running Ubuntu 22.04.5 LTS.

\subsection{MPC Horizon}\label{sec:MPC-horizon}

Initially, we investigate a baseline scenario where the MPC horizon is set to one. We evaluate the coordination performance across a varying number of agents $N$, under identical environmental settings. To ensure a rigorous comparative analysis of the consensus-reaching time and the maximum MPC step calculation time, it is important to maintain consistent initial conditions. Thus, we use a fixed set of initial values $\bm\gamma_0 = [4.5, 6, 0, 3.5, 2.5, 5, 3.5, 6, 2, 4, 5.5, 1, 6, 3.5, 2]$. For a given number of agents \(N\), the first \(N\) components of this vector are assigned to the corresponding UAVs. In this way, the initial conditions of the original agents remain unchanged as the swarm size increases, while additional agents are introduced with comparable initial values in order to reduce variability in the baseline comparison.
 The reference trajectories used in this scenario are circles of varying radii, sharing a common center, with a total mission duration of 70 seconds. All agents communicate with one another throughout the mission, and no external disturbances are considered.

The simulation results presented in Table \ref{tab:mpc_comparison} imply that, while shorter horizons lead to a delay in reaching consensus, the maximum MPC calculation time observed throughout the mission decreases. Given that the MPC step is calculated at each time step $h$ for each of the $N$ UAVs throughout the mission, these incremental delays might accumulate in case of longer horizons, accounting for the increased computation time, which may be heavy for small and low-resource UAVs.

Furthermore, the results show the scalability of the approach; as the number of agents increases, consensus is reached faster, while the associated increase in computation time remains practically unchanged. 

The simulation results provide empirical support for the exponential stability of the DMPC scheme established in Theorem~\ref{theorem-main}.

\begin{figure}[ht]
    \centering
    \subfloat[]{\includegraphics[width=\columnwidth]{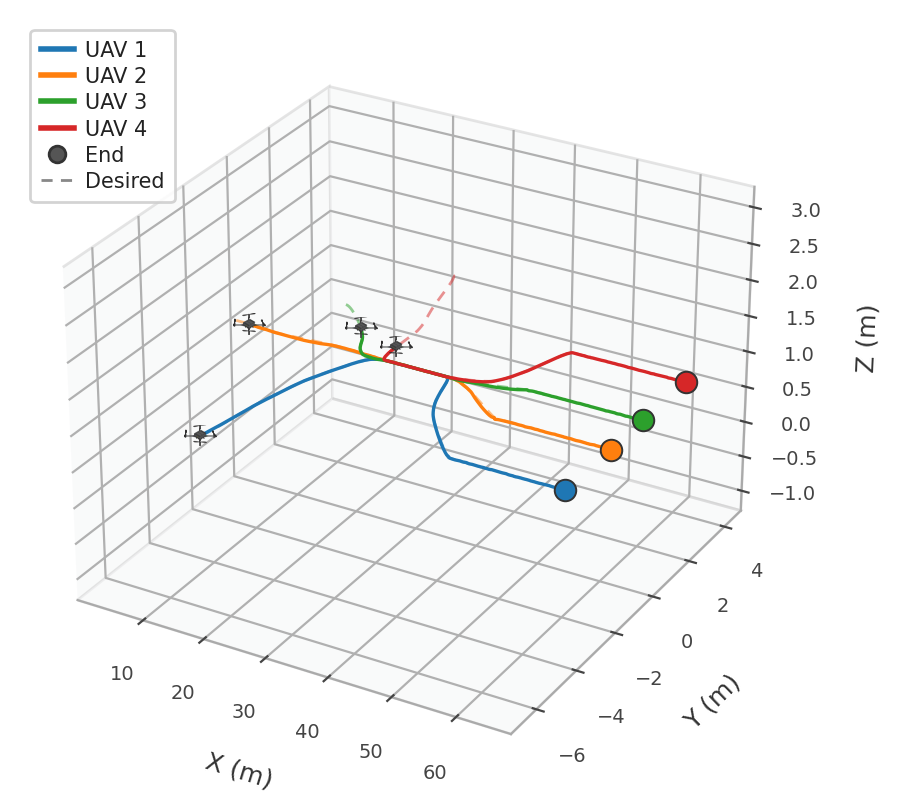}\label{fig:top_figure}}

    \caption{Agent trajectories. UAV icons represent the initial positions of the agents.}
    \label{fig:trajectories}
\end{figure}

\begin{table*}[!t]
\centering
\caption{Consensus time and maximum MPC step computation time for varying prediction horizons and number of agents.}
\label{tab:mpc_comparison}
\setlength{\aboverulesep}{0pt}
\setlength{\belowrulesep}{0pt} 
\begin{tabular}{lcccccccccccc}\toprule
\rowcolor{gray!20}
& \multicolumn{4}{c}{\textbf{$N=5$ Agents}} 
& \multicolumn{4}{c}{\textbf{$N=10$ Agents}} 
& \multicolumn{4}{c}{\textbf{$N=15$ Agents}} \\
\cmidrule(lr){2-5} \cmidrule(lr){6-9} \cmidrule(lr){10-13}
\rowcolor{gray!10}
\textbf{} 
  & \textbf{$K=1$} & \textbf{$K=5$} & \textbf{$K=10$} & \textbf{$K=25$} 
  & \textbf{$K=1$} & \textbf{$K=5$} & \textbf{$K=10$} & \textbf{$K=25$} 
  & \textbf{$K=1$} & \textbf{$K=5$} & \textbf{$K=10$} & \textbf{$K=25$} \\
\midrule
\noalign{\vspace{3pt}}
Consensus Time (s)      & 26.95 & 6.55 & 5.45 & 4.85 & 11.35 & 5.7 & 4.8 & 3.75 & 6.35 & 5.4 & 4.45 & 3.65 \\
\rowcolor{gray!5}
MPC Solve Time (s) & 0.0083 & 0.0423 & 0.0101 & 0.0215 & 0.0055 & 0.0102 & 0.0100 & 0.0177 & 0.0060 & 0.0110 & 0.0106 & 0.0186 \\
\bottomrule
\end{tabular}
\end{table*}

\subsection{Corridor Navigation: Predefined and Autonomous Passage Ordering}\label{sec:seqvsgame}
In this scenario, we highlight the flexibility and adaptability of the proposed approach by incorporating operational objectives directly into the cost function, thereby enabling the UAVs to autonomously navigate and pass through a narrow corridor that permits only one UAV at a time.
We consider two cost functions to address this problem.
The first relies on offline predefined ordering of the UAVs, while the second enables the UAVs 
to determine the passage order autonomously during the mission through a game-theoretic competition, where each UAV competes to pass first.

The agent trajectories in 3D space are illustrated in Fig.~\ref{fig:trajectories}, where UAV icons 
denote the initial position of each agent and the dashed line represents the desired 
reference trajectory over the 100-second mission. The mission start times are set to $\bm\gamma_0 = [0, 3, 7, 31]$ for UAV~1 through UAV~4, respectively. The agents have $z$-axis 
oscillations throughout the flight, except while passing the corridor. Between the 35th and 55th seconds of the mission, the agents must navigate through the corridor, requiring sequential entry to avoid inter-agent collisions,
and temporary incoordination. Upon exiting the tunnel, the agents must establish coordination.

\subsubsection{Offline Ordering via Predefined Priority}
The first modified objective function penalizes each drone $i$ for deviating from a desired separation $\Delta$ relative to its neighbors during a pre-specified phase of the mission. An initial ordering is used to order the drones offline. Drones with a lower index ($j < i$) should maintain a relative lead, while those with a higher index ($j > i$) should remain behind. This ordering logic is embedded into the cost function \eqref{cost1} by redefining  $F_i^{cm}\left(\bm{\delta}^{k}_{i},\bm{\delta}^{k-1}_{-i}\right)$ as follows:
\vspace{-8pt}
\begin{equation}
\begin{split}
    F_i^{cm}\left(\bm{\delta}^{k}_{i},\bm{\delta}^{k-1}_{-i}\right)&=\sum_{\tau=1}^{K} \Biggl[ 
    \left(1 - \mathcal{I}_{i\tau}^k\right)
     \sum_{j=1,\, j\neq i}^{N} 
    \left(\delta_{i\tau}^k - \delta_{j\tau}^{k-1}\right)^2 \\
    &+\mathcal{I}_{i\tau}^k
     \sum_{j=1,\, j\neq i}^{N} \left(\delta_{i\tau}^k - \delta_{j\tau}^{k-1} + (i-j)\Delta\right)^2 
    \Biggr],
\end{split}
\label{eq:Fi1}
\end{equation}
where $\mathcal{I}_{i\tau}^k$ is defined as
\begin{equation}\label{eq:indicator}
    \mathcal{I}_{i\tau}^k :=\mathbf{1}\bigl[\gamma_1^* \leq \delta_{i\tau}^k + t_k\leq \gamma_2^*\bigr]_,
\end{equation}
with $\mathbf{1}[\cdot]$ denoting the indicator function. $\gamma_1^*$ and $\gamma_2^*$ are predetermined values that define the interval during which the separation should be maintained. The parameter $\Delta$ prescribes the ordering of the UAVs and the magnitude of their separation. 

\begin{figure}[ht]
\centering
\begin{tabular}{cl}
    \includegraphics[width=0.35\textwidth]{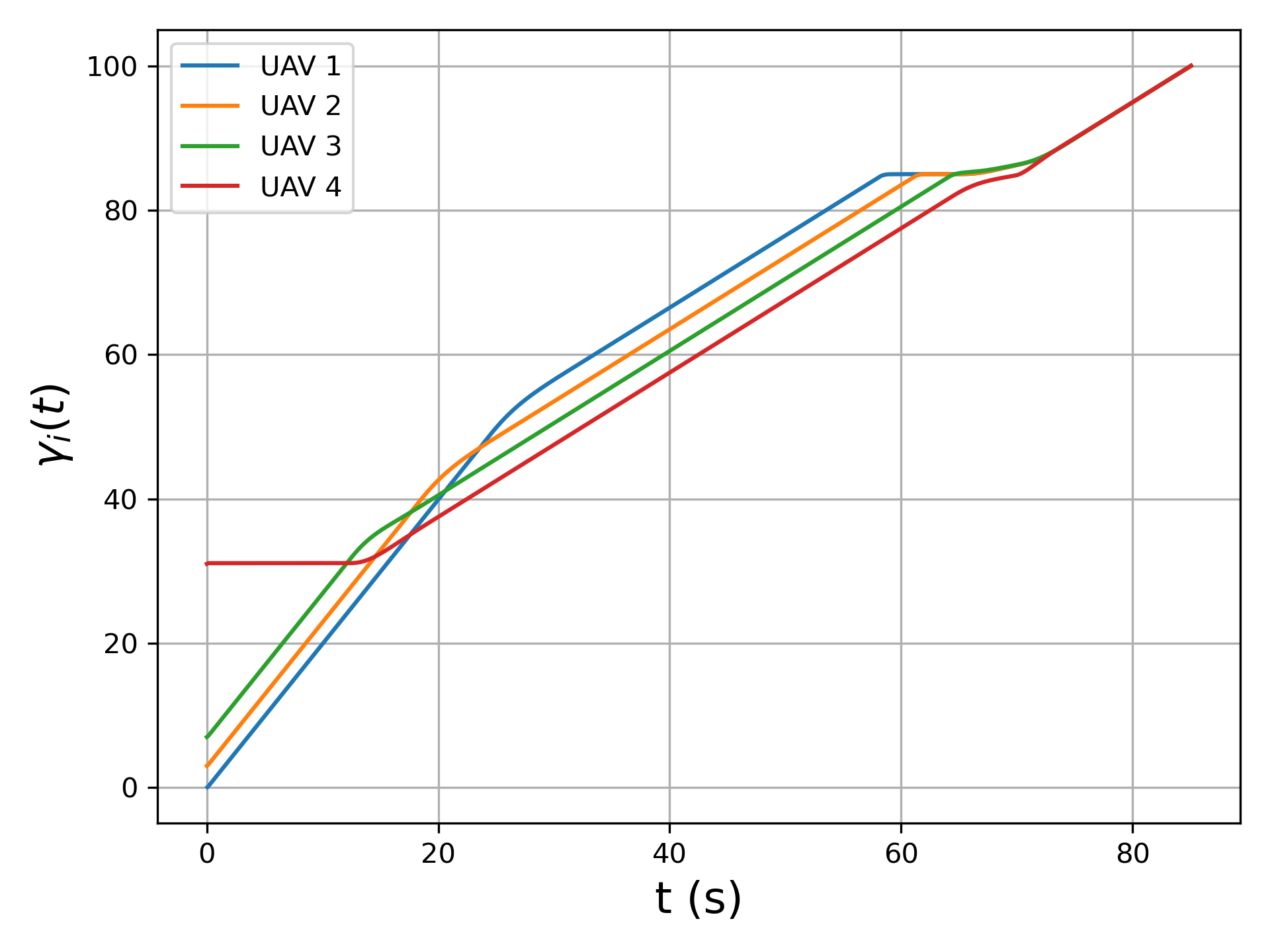} & \raisebox{1.5cm}{(a)} \\[4pt]
    \includegraphics[width=0.35\textwidth]{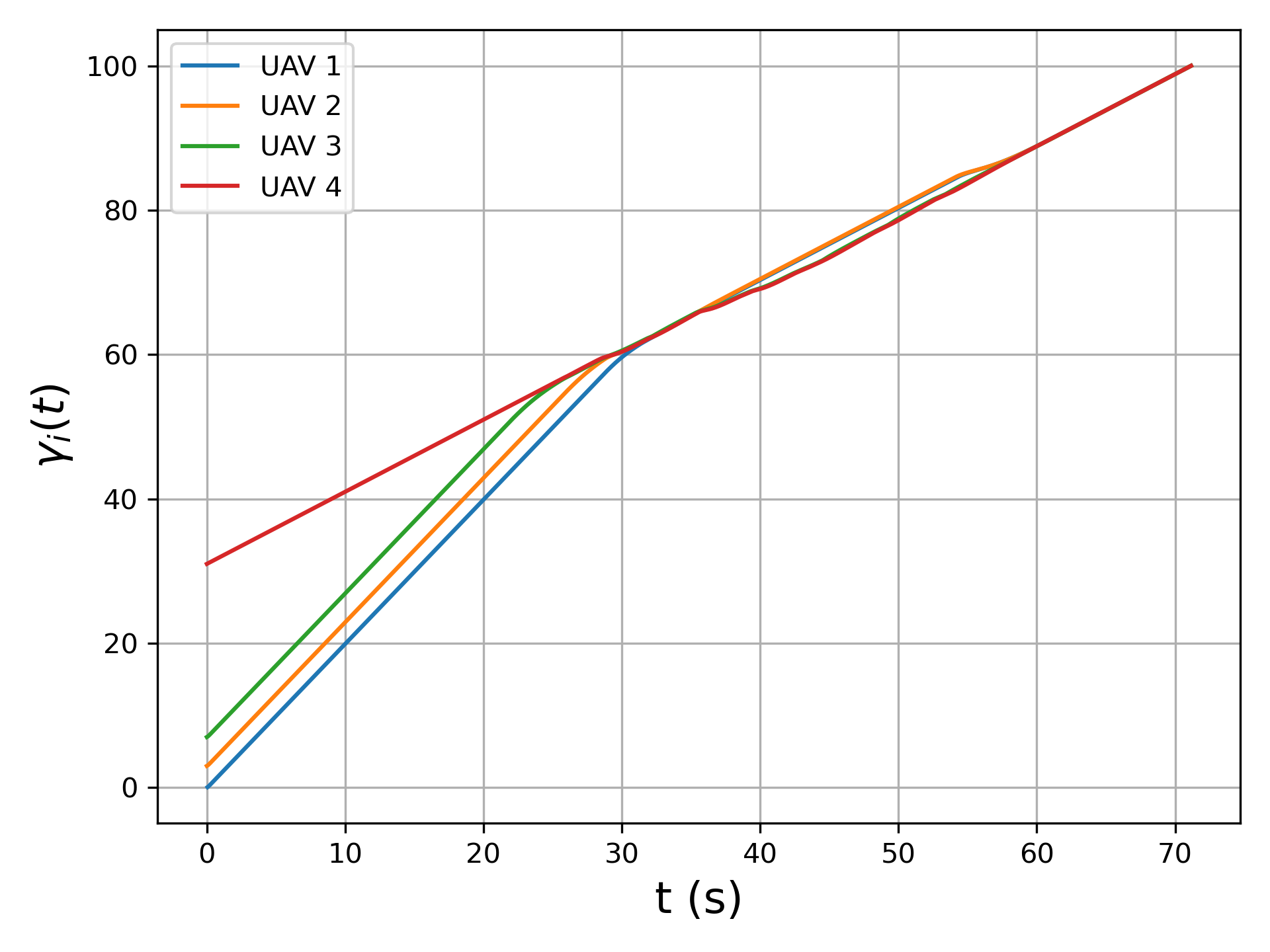}  & \raisebox{1.5cm}{(b)}
\end{tabular}
\caption{Consensus parameter $\gamma$ over time. (a) simulation under the cost function~\eqref{eq:Fi1}; 
    (b) simulation under the cost function~\eqref{eq:Fi2}.}
\label{fig:pair_one}
\end{figure}

The simulation is conducted with $\Delta = -3$, $\gamma_1^* = 0$, and $\gamma_2^* = 85$.

In this scenario, despite their initial spatial configuration, the UAVs are reordered due to disturbances as they approach the passage. In particular, the fourth UAV becomes closest to the bottleneck, while the first UAV is the furthest. As a result, the UAVs pass through the corridor according to the prescribed ordering determined by the index $i$ and the parameter $\Delta$.
As the $\gamma$ values suggest in Fig.~\ref{fig:pair_one}(a), the fourth UAV was 
successful in hovering and waiting for its scheduled turn to pass. However, UAV-1 failed to enter the corridor first; instead, the 3rd UAV entered 
first, followed by the 2nd. Consequently, reordering occurred already inside the corridor, resulting in three collisions between each pair of UAVs (see Fig.~\ref{fig:pair_two}(a)). The mission therefore fails, though the simulator allows the experiment to continue in the presence of collisions.

\subsubsection{Autonomous Ordering via Agent Competition/Game}
To address the limitations of the offline ordering approach, a game-theoretic formulation is introduced, in which the UAVs determine the passage order autonomously at the time of the mission rather than relying on a predefined ordering. 
The modified objective function again follows the same structure as in~\eqref{cost1}, 
with the exception that the second term is now defined as:
\vspace{-10pt}
\begin{equation}
\begin{split}
    F_i(\bm\delta^{k}_{i},\bm\delta^{k-1}_{-i}) = \sum_{\tau=1}^{K} \Biggl[
    (1-\mathcal{I}_{i\tau}^k)\sum_{j=1,\, j\neq i}^{N} 
    \left(\delta_{i\tau}^k - {\delta}_{j\tau}^{k-1}\right)^2 \\
    + \mathcal{I}_{i\tau}^k\sum_{j=1,\, j\neq i}^{N}  \Bigl[
    \max\!\left(0,\delta_{j\tau}^k - \delta_{i\tau}^{k-1}\right)^2 +\xi(\delta^k_{j\tau}-\delta^{k-1}_{i\tau})\psi_i
    \Bigr] \Biggr],
\end{split}
\label{eq:Fi2}
\end{equation}
where $\xi$ denotes the discrete Dirac delta (Kronecker delta), defined as
\[
    \xi(x) = \begin{cases} 1, & x = 0 \\ 0, & x \neq 0 \end{cases}
\]
and $\bm{\psi}$ represents a weighting vector that assigns relative importance to individual UAVs in the case where $\gamma_i = \gamma_j$ for all $i, j$. It serves as an artificial ordering mechanism to resolve ambiguity in equally favorable cases. It is required that all elements of $\bm{\psi}$ be mutually distinct, i.e. $\psi_i \neq \psi_j, \quad \forall\, i \neq j$. $\mathcal{I}_{i\tau}^k$ is as defined in~\eqref{eq:indicator}.

The coordination term is active outside the interval $[\gamma_1^*,\gamma_2^*]$, encouraging drones to 
coordinate. Inside the interval, the coordination term is replaced by a competitive term that penalizes drone $i$ for every neighbor that is ahead of it. This creates a racing incentive inside the corridor and ensuring successful execution of the mission.

The simulation parameters are set to $\gamma_1^* = 0$, $\gamma_2^* = 85$ as in the previous case, and 
$\bm{\psi} = [15, 25, 35, 45]$. As observed in Fig.~\ref{fig:pair_two}(b), the UAVs completed the mission without any collisions, successfully and autonomously determining the passage order, with the closest UAV to the bottleneck entering 
the corridor first, followed by the remaining ones accordingly. After exiting the 
tunnel, coordination is achieved (see Fig.~\ref{fig:pair_one}(b)), 
particularly once the competition term is deactivated (i.e., after $\gamma = 85$).

Thus, the offline ordering approach may fail depending on the disturbances, resulting in inter-agent collisions, whereas the game-theoretic approach successfully determines the passage 
order at the time of the mission regardless of the disturbances and ensures a safe mission execution. The game/competing cost function (e.g. see \eqref{eq:Fi2}) is therefore more reliable and better suited for real-world use.

\begin{figure}[ht]
\centering
\begin{tabular}{cl}
    \includegraphics[width=0.35\textwidth]{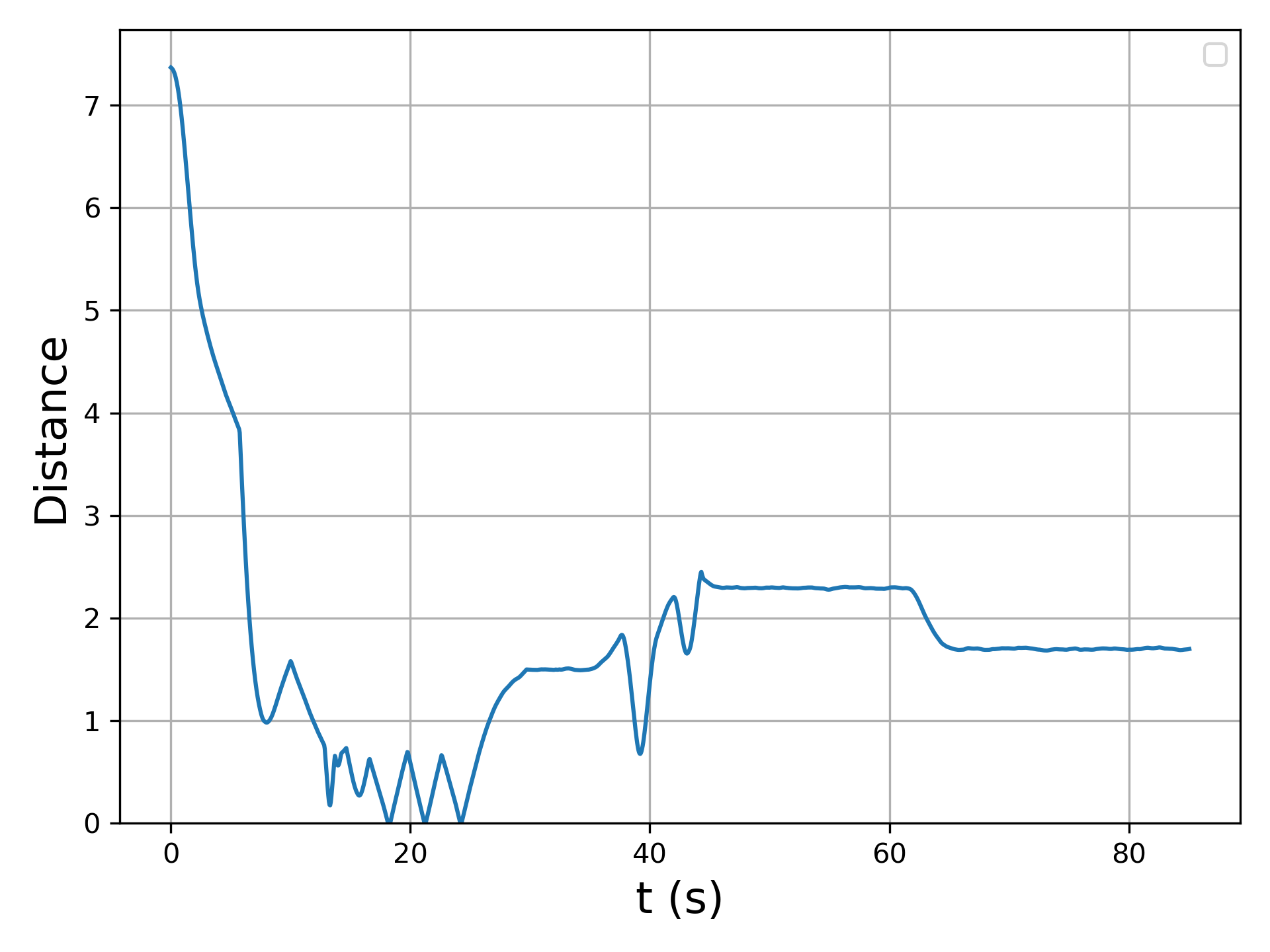} & \raisebox{1.5cm}{(a)} \\[4pt]
    \includegraphics[width=0.35\textwidth]{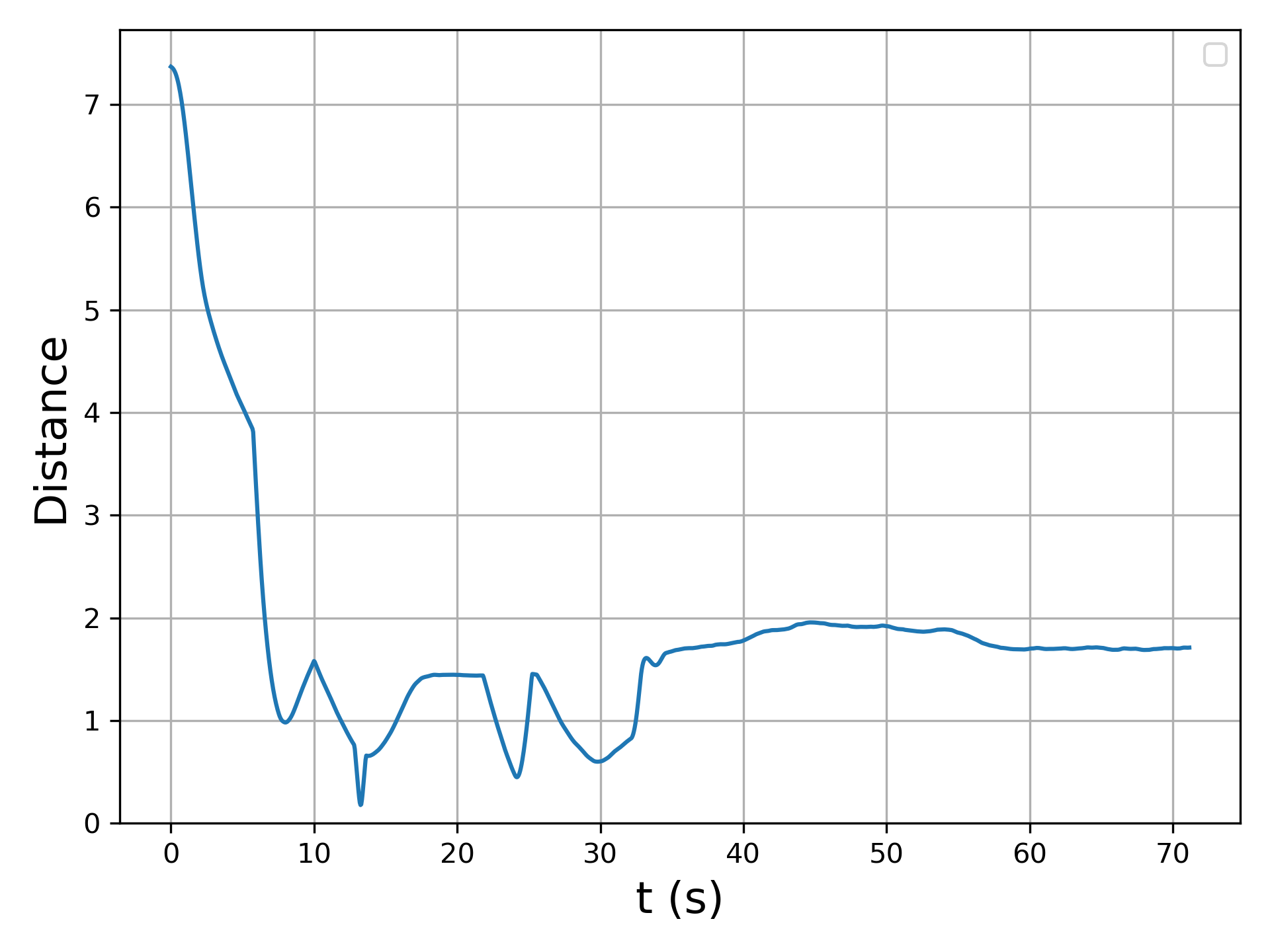}  & \raisebox{1.5cm}{(b)}
\end{tabular}
\caption{Minimum distance maintained between any two UAVs over time: (a) simulation under the cost function~\eqref{eq:Fi1}; 
    (b) simulation under the cost function~\eqref{eq:Fi2}.}
\label{fig:pair_two}
\end{figure}

The implementation code for all simulations presented in this work is publicly available 
at \url{https://github.com/amanucha/rotorpy_coordination}.

\section{Conclusion and Future Work}

In this paper, we analyzed a distributed model predictive control (DMPC) approach within the cooperative path-following framework.
We established exponential stability of the proposed discrete-time DMPC scheme under a fixed and connected communication network for the prediction horizon $K=1$. The analysis relies on an explicit characterization of the closed-loop dynamics and establishes contraction properties that ensure convergence while providing an explicit convergence rate.

Simulation results demonstrate the real-time applicability and agility of the proposed method. In particular, they highlight scalability with respect to the number of UAVs and illustrate the advantages of the approach in scenarios where preplanned coordination may fail. These results emphasize the role of the game/optimization-based structure in enabling adaptive and robust coordination, while also providing numerical validation of the theoretical findings.

Future work will focus on extending the theoretical analysis to time-varying communication networks and larger prediction horizons $K>1$, as well as to non-quadratic cost functions,  enlarging the range of mission settings. We also plan to pursue experimental validation on real UAV platforms.

\bibliographystyle{IEEEtran}
\bibliography{references}  

\end{document}